\newcommand{\IC}{\mathbb{C}}
\newcommand{\Z}{{\mathbb Z}}
\newcommand{\lra}{\longrightarrow}
\newcommand{\abs}[1]{\lvert#1\rvert}
\newcommand{\norm}[1]{\lVert#1\rVert}
\newcommand{\Hm}[1]{\leavevmode{\marginpar{\tiny%
$\hbox to 0mm{\hspace*{-0.5mm}$\leftarrow$\hss}%
\vcenter{\vrule depth 0.1mm height 0.1mm width \the\marginparwidth}%
\hbox to 0mm{\hss$\rightarrow$\hspace*{-0.5mm}}$\\\relax\raggedright
#1}}}
\newtheorem{theorem}{Theorem}[section]
\newtheorem{corollary}[theorem]{Corollary}
\newtheorem{proposition}[theorem]{Proposition}
\newtheorem{lemma}[theorem]{Lemma}
\theoremstyle{definition}
\newtheorem{definition}[theorem]{Definition}
\newtheorem{example}[theorem]{Example}
\newtheorem{remark}[theorem]{Remark}
\title[Sobolev inequalities and eigenvalue growth]{Sobolev-Type Inequalities and Eigenvalue Growth on Graphs with Finite Measure}
\author{Bobo Hua}
\address[B. Hua]{School of Mathematical Sciences, LMNS, Fudan University, Shanghai 200433, China.}
\email{bobohua@fudan.edu.cn}
\author{Matthias Keller}
\address[M. Keller]{Institut f\"ur Mathematik, Universit\"at Potsdam,
 14476  Potsdam, Germany}
\email{{matthias.keller@uni-potsdam.de}}
\author{Michael Schwarz}
\address[M. Schwarz]{Institut f\"ur Mathematik, Universit\"at Potsdam, 14476  Potsdam, Germany}
\email{mschwarz@math.uni-potsdam.de}
\author{Melchior Wirth}
\address[M. Wirth]{Institute of Mathematics, Department of Mathematics and Computer Science, Friedrich Schiller University Jena, 07737 Jena, Germany}
\email{melchior.wirth@uni-jena.de}
\begin{document}
\begin{abstract}
In this note we study the eigenvalue growth of infinite graphs with discrete spectrum. We assume that the corresponding Dirichlet forms satisfy certain Sobolev-type inequalities and that the total measure is finite. In this sense, the associated operators on these graphs display similarities to elliptic operators on bounded domains in the continuum. Specifically, we prove lower bounds on the eigenvalue growth and show by examples that corresponding upper bounds can not be established.
\end{abstract}

\maketitle

\section*{Introduction}
{The spectral theory of the Laplacian on Euclidean domains and Riemannian manifolds has been intensively studied in the literature, see e.g. \cite{CouHil53,Chavel84,SY94}}. In these situations, discrete spectrum for the Laplacian on Riemannian manifolds is known to occur for two reasons. Vaguely one could say the manifold is either ``very bounded'' or ``very unbounded''. Precisely, ``very bounded'' means compact or pre-compact with suitable boundary conditions and the discreteness of the spectrum of the Laplacian in this case is mathematical folklore. On the other hand, ``very unbounded'' can be expressed as uniform unbounded sectional curvature growth as was shown by Donnelly/Li \cite{DL}. 

{Our interest lies in the analogous question for graph Laplacians. As compact graphs are finite and the corresponding graph Laplacian is a finite dimensional operator, so the discreteness of the spectrum is of course trivial in this case.} However, in recent years the investigation of discreteness of spectrum on infinite graphs emerged. The ``very unbounded'' case can be made precise via notions of curvatures \cite{Ke10,Woj08} or more generally unbounded vertex degree growth under the presence of some type of isoperimetric estimate \cite{BGK15}. More recently, also non-trivial ``very bounded'' cases in the sense of pre-compactness of the graphs were investigated, \cite{GHKLW15,KLSW17}. These notions can be equivalently described by some type of Sobolev embedding.

{After having established discreteness of the spectrum, the next question concerns the asymptotics of the eigenvalue growth.} On {manifolds} the asymptotics of the eigenvalues are governed by Weyl's law or Weyl's asymptotics, which is again mathematical folklore in the compact case and also holds to some degree in the non-compact case (see \cite{Cheng75,KLP}). In contrast, for graphs every asymptotics of eigenvalues is possible in the ``very unbounded'' case, see \cite{BGK15}. Furthermore, in \cite{GHKLW15,KLSW17} lower bounds on the eigenvalue asymptotics were given in the ``very bounded'' case.

The purpose of this note is to explore the asymptotics of eigenvalues for graphs, {or, more generally, discrete measure spaces}, under the presence of a Sobolev-type inequality and finite measure. {As main results, we will prove lower bounds} on the eigenvalues $ \lambda_{k} $ of the form
\begin{align*}
\lambda_{k} \ge C k^{1-\frac{2}{p}} -c,\qquad k\in\mathbb{N},
\end{align*}
for certain explicit constants $ C$ and $c $, and $ p $ being the exponent in the Sobolev-type inequality, see Theorem~\ref{spectral_bound_dist_gen},~\ref{spectral_bound_lin} and~\ref{thm:pSobolev}. In particular, this extends the results of \cite{GHKLW15,KLSW17}.

{In the continuum,  the proof of the Weyl law relies on an analysis of the short-time behavior of the heat kernel, which is quite different for graphs (see \cite{KLMST16}). Therefore this proof cannot be adapted to our setting. We adopt two strategies to prove our main results. The first one relies on elliptic methods. We use the $\ell^\infty$-Sobolev inequality, finite measure and the discreteness of the space to prove the lower bound in Theorem~\ref{spectral_bound_lin}. The other strategy uses parabolic methods. Following  Cheng and Li \cite{CL81}, we obtain the heat kernel on-diagonal upper bound via the $\ell^p$-Sobolev inequality, $p\in (2,\infty]$, and derive the lower bound for the eigenvalue growth by considering the heat trace, see Theorem~\ref{thm:pSobolev}.}

In contrast to the continuum case we show that in the discrete case one can not establish corresponding upper bounds. Specifically, one can achieve any eigenvalue growth by manipulating the measure, see Proposition~\ref{no_upper_bound}.

The paper is structured as follows. In the next section we set the stage by introducing the basic notions and presenting the main results. 
These results can be separated into the cases $ p=\infty $ and $p\in(2,\infty)$. The first case,  $ p=\infty $, is proven in Section~\ref{sec:p=infty}, 
and the second case, $ p\in(2,\infty)$, is proven in Section~\ref{sec:p>2}. Finally, in Section~\ref{sec:ex}, we discuss the classes of canonically compactifiable and uniformly transient graphs. In this section we also discuss sharpness of our estimates.

\scriptsize
\noindent\textbf{Acknowledgments: }
The research of M.K. was  supported by the priority program SPP2026 of the German Research Foundation (DFG). M.W. gratefully acknowledges financial support by the German Academic Scholarship Foundation (Stu\-dien\-stif\-tung
des deut\-schen Vol\-kes) and by the German Research Foundation (DFG) via RTG 1523/2. 
\normalsize

\section{Dirichlet Forms on Discrete Measure Spaces and Main Results}\label{sec:pre} 
Let $X$ be an at most countable set. {Via additivity a function $m\colon X\lra (0,\infty)$ is extended  to a measure on $X$.  We call the pair $(X,m)$ a \emph{discrete measure space} and write $\ell^p(X,m)$ for the corresponding $\ell^p$-spaces and $ \norm{\cdot}_{p} $ for their norms, $ p\in[1,\infty] $.} For $ p=2 $, we denote the scalar product by $ \langle\cdot,\cdot\rangle $.

A map $C\colon\IC\lra\IC$ with $C(0) = 0$ and {$\lvert C(s)-C(t)\rvert \leq\lvert s-t\rvert$ for $s,t\in\IC$} is called a normal contraction. A densely defined, closed, real quadratic form $ Q $ on $\ell^2(X,m)$ with domain $ D(Q) $ that satisfies $C\circ u\in D(Q)$ and $Q(C\circ u) \leq Q(u)$ for all $u\in\ell^2(X,m)$ and
all normal contractions $C$ is called a \emph{Dirichlet form}. 

In this paper we study the spectral theory of the self-adjoint positive generator $ L $ of  $ Q $ which is uniquely determined by $ Q(f)=\|L^{1/2}f\|_{2}^{2} $, $ f\in D(Q) $.

\begin{example}[Dirichlet forms induced by graphs]\label{ex:Q}
 Given an at most countable set $X$, a pair $(b,c)$ consisting of a symmetric function $b\colon X\times X\lra[0,\infty)$ that vanishes on the diagonal and satisfies
$$\sum_{y\in X} b(x,y)<\infty$$ for all $x\in X$,
and a function $c\colon X\lra [0,\infty)$ is called a graph on $X$. We equip the set $X$ with a measure $m$ such that $(X,m)$ is a discrete measure space. Then, there are two natural 
Dirichlet forms $Q^{(D)}$ and $Q^{(N)}$ on $\ell^2(X,m)$ that act on functions $f$ in their respective domains as \begin{align}\tag{$ Q $}\label{eq:Q}
f\mapsto\frac 1 2\sum_{x,y\in X}b(x,y)\abs{f(x)-f(y)}^2+\sum_{x\in X} c(x)\abs{f(x)}^2.
\end{align}
For more details we refer to Section~\ref{sec:ex}.
\end{example}

\begin{definition}[Sobolev inequalities]\label{def:Sobolev}
Let $(X,m)$ be a discrete measure space and $Q$ be a Dirichlet form on $\ell^2(X,m)$. For any $p\in (2,\infty],$ $\alpha\geq0,$ and $o\in X,$ we say that $Q$ satisfies a 
$\ell^p_{\alpha,o}$-\emph{Sobolev inequality} if there is a constant $S$ such that $$\|u\|_p^2\leq S(Q(u)+\alpha|u(o)|^2)$$ for all $u$ in the form domain of $Q.$ 
The minimal constant $S$ satisfying the above inequality is called the $\ell^p_{\alpha,o}$-\emph{Sobolev constant}, denoted by $S_{p,\alpha,o}(Q).$ 

In the case $\alpha=0,$ the $\ell^p_{\alpha,o}$-Sobolev inequality reduces to
\begin{equation}
\tag{$S_p$}\label{def:sb}\|u\|_p^2\leq SQ(u),
\end{equation}
which we call the $\ell^p$-\emph{Sobolev inequality}. The minimal constant $S$ in $(S_p)$ is called $\ell^p$-Sobolev constant and denoted by $S_p(Q)$.
\end{definition}

Next, we discuss some examples of graphs satisfying Sobolev inequalities. 
\begin{example}[The Euclidean lattice]
We consider the lattice $\Z^d,$ $d\geq 3$, as the graph with standard weights, that is, $c\equiv 0$ and $b(x,y)=1$ if the Euclidean distance between $x$ and $y$ is $1$, and $b(x,y)=0$ otherwise. Furthermore, we equip $ \Z^{d} $ with the counting measure.
Then,  the quadratic form acting on  $\ell^2(X,m)$ as \eqref{eq:Q} from Example~\ref{ex:Q}  satisfies \eqref{def:sb} for $p=\frac{2d}{d-2}$, see \cite[Theorem~3.6]{HuaMug15}.
\end{example}
This example is only a special case of the following class of examples:
\begin{example}[Graphs satisfying isoperimetric inequalities]
Let $(b,0)$ be a graph over a discrete measure space $(X,m)$  such that the  quadratic form acting as \eqref{eq:Q} from Example~\ref{ex:Q} is bounded on $\ell^2(X,m)$. If the graph satisfies a $d$-dimensional isoperimetric inequality for $d>2,$ see \cite[Chapter~I.4]{Woess00} for the definition, then the form satisfies \eqref{def:sb} for $p=\frac{2d}{d-2}$ \cite[Theorem~3.6]{HuaMug15}. 
\end{example}
\begin{example}[Graphs with the normalizing measure]
Let a graph $(b,0)$ on an at most countable discrete set $X$ be equipped with the normalizing measure $n$ given by $n(x):=\sum_{y\in X}b(x,y)$.
Suppose the form acting as \eqref{eq:Q} satisfies \eqref{def:sb} for $p\in(2,\infty]$ and $\inf_{x\in X}n(x)>0.$ 
Then, for any graph $(b,0)$ over $ (X,m) $ with finite  measure, i.e., $m(X)<\infty$, the corresponding form acting as \eqref{eq:Q} satisfies \eqref{def:sb} (which can be easily seen since there exists $ c>0 $ such that $ n\ge c m $ by the uniform positivity of  $ n $ and the finiteness and, thus, boundedness of $ m $). 
This yields many examples with finite  measure satisfying \eqref{def:sb}.
\end{example}

Keller, Lenz, Schmidt and Wojciechowski \cite{KLSW17} introduced the notion of \emph{uniformly transient graphs} and  proved that a  graph $(b,0)$ over $ (X,m) $ is 
uniformly transient if and only if independently of the finite measure $m$ the form $Q^{(D)}$ (see Section~\ref{sec:ex}) satisfies an $\ell^\infty$-Sobolev inequality,
i.e., $(S_\infty)$ holds. 
Using the Grothendieck factorization principle, they  proved that the spectrum of the Dirichlet Laplacian of a uniformly transient graph 
with finite  measure is discrete. Moreover, c.f. \cite[Theorem~7.3]{KLSW17}, they proved a lower bound for the growth of the eigenvalues of the Dirichlet Laplacian. 

Using basically the same proof as in \cite{KLSW17} we get the same lower bound in our more general situation.
\begin{theorem}\label{spectral_bound_dist_gen}
Let $(X,m)$ be a discrete measure space with finite  measure $ m(X)<\infty $.  Let $L$ be the generator of a Dirichlet form $Q$ that satisfies a 
$\ell^\infty_{\alpha,o}$-Sobolev inequality for some $\alpha\geq 0,o\in X$. Then, the spectrum of the generator $L$ is purely discrete and for every   enumeration  $(x_k)_{k\in\mathbb{N}}$ of $X$ with $x_1=o$ and  every $k\geq 1$ we have
\begin{equation}\label{eq:keller}
\lambda_{k}\geq \frac 1{S_{\infty,\alpha,o}(Q) m(X\setminus\{x_1,\dots,x_{k-1}\})},
\end{equation}
where $\lambda_k$ denotes the $k$-th eigenvalue of $L$ counted with multiplicity. 
\end{theorem}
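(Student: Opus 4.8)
The plan is to combine the $\ell^\infty_{\alpha,o}$-Sobolev inequality with a standard min-max argument, exploiting finiteness of the measure to get the required polynomial (here linear in $1/m$) control. First I would observe that the $\ell^\infty_{\alpha,o}$-Sobolev inequality, together with $m(X)<\infty$, already forces the embedding $D(Q)\hookrightarrow \ell^2(X,m)$ to be compact: if $(u_n)$ is bounded in the form norm, then it is bounded in $\ell^\infty(X,m)$, and since $m(X)<\infty$ one can pass to a subsequence converging pointwise (hence, by dominated convergence, in $\ell^2(X,m)$) --- this is exactly the Grothendieck-factorization-flavoured argument in \cite{KLSW17}. Compactness of the resolvent gives purely discrete spectrum, so the main content is the quantitative lower bound \eqref{eq:keller}.

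For the eigenvalue bound I would use the variational (Courant--Fischer) characterization
\begin{equation*}
\lambda_k=\min_{\substack{V\subseteq D(Q)\\ \dim V=k}}\ \max_{0\neq u\in V}\frac{Q(u)}{\norm{u}_2^2}.
\end{equation*}
Fix the enumeration $(x_k)_{k\in\mathbb N}$ with $x_1=o$. Given a $k$-dimensional subspace $V$, one can find a nonzero $u\in V$ that vanishes at the $k-1$ points $x_1,\dots,x_{k-1}$, because these $k-1$ evaluation functionals cannot all be independent on a $k$-dimensional space. For such a $u$ we have $u(o)=u(x_1)=0$, so the $\ell^\infty_{\alpha,o}$-Sobolev inequality collapses to $\norm{u}_\infty^2\le S_{\infty,\alpha,o}(Q)\,Q(u)$. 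Now estimate $\norm{u}_2^2=\sum_{x}\abs{u(x)}^2 m(x)$; since $u$ is supported on $X\setminus\{x_1,\dots,x_{k-1}\}$ and $\abs{u(x)}^2\le\norm{u}_\infty^2$ there, we get $\norm{u}_2^2\le \norm{u}_\infty^2\, m(X\setminus\{x_1,\dots,x_{k-1}\})\le S_{\infty,\alpha,o}(Q)\, m(X\setminus\{x_1,\dots,x_{k-1}\})\,Q(u)$. Hence $Q(u)/\norm{u}_2^2\ge \bigl(S_{\infty,\alpha,o}(Q)\,m(X\setminus\{x_1,\dots,x_{k-1}\})\bigr)^{-1}$, and since this holds for some $u$ in every $k$-dimensional $V$, taking the outer minimum yields \eqref{eq:keller}.

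The one genuinely delicate point is the interplay between the form domain and pointwise evaluation: I must make sure that for every $u\in D(Q)$ the pointwise values $u(x_j)$ are well-defined and that the subspace $\{u\in V: u(x_1)=\dots=u(x_{k-1})=0\}$ really has dimension at least one. The $\ell^\infty_{\alpha,o}$-Sobolev inequality takes care of this: it shows $D(Q)\subseteq\ell^\infty(X,m)$ continuously, so each evaluation functional $\delta_{x_j}$ is bounded on $D(Q)$; restricting these $k-1$ bounded functionals to a $k$-dimensional subspace of $D(Q)$ gives a linear map to $\mathbb C^{k-1}$ with nontrivial kernel. The rest is bookkeeping. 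I expect the argument to be essentially the one in \cite[Theorem~7.3]{KLSW17}, with the only adaptation being that we work with an abstract Dirichlet form on a discrete measure space rather than a graph, so nothing in the proof uses the graph structure $(b,c)$ --- it only uses the Sobolev inequality, finiteness of $m$, and discreteness of $X$.
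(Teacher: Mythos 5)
Your argument is correct, and for the eigenvalue bound it is essentially the paper's proof: the paper also applies Courant--Fischer (in the sup--inf formulation, testing against $1_{x_1},\dots,1_{x_{k}}$ so that the competitors satisfy $f(x_1)=\dots=f(x_{k})=0$), uses $x_1=o$ to make the $\alpha\abs{f(o)}^2$-term vanish, and concludes with exactly your inequality $\norm{f}_2^2\leq m(X\setminus\{x_1,\dots,x_{k}\})\norm{f}_\infty^2$; your min-over-subspaces version with the common kernel of the $k-1$ evaluation functionals is just the dual bookkeeping of the same idea. Where you genuinely differ is the discreteness of the spectrum: the paper (Proposition~\ref{spectrum_discrete}) factors $e^{-tL}$ through $\ell^\infty(X)$ and invokes the Grothendieck factorization principle to conclude that $e^{-tL}$ is Hilbert--Schmidt, hence trace class, whereas you prove compactness of the embedding $(D(Q),\norm{\cdot}_Q)\hookrightarrow\ell^2(X,m)$ directly by a diagonal argument plus dominated convergence. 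Your route is more elementary and suffices for purely discrete spectrum; the paper's route yields the quantitatively stronger trace-class property of the semigroup, which it reuses in the heat-trace argument for Theorem~\ref{thm:pSobolev}. One small point you should make explicit in your compactness step: boundedness in the form norm controls the Sobolev right-hand side only after observing $\abs{u(o)}^2\leq \norm{u}_2^2/m(o)$, so that $Q(u)+\alpha\abs{u(o)}^2$ (and hence $\norm{u}_\infty$) is bounded along the sequence; with that remark the argument is complete.
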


Note that this result depends on the choice of the enumeration of the set $X$.

Using the same setting as in the theorem above, we will furthermore prove an eigenvalue estimate which is independent of the enumeration. 
After the theorem, we will discuss that both are complimentary results. 

\begin{theorem}\label{spectral_bound_lin}
Let $(X,m)$ be a discrete measure space with finite  measure $ m(X)<\infty $. Let $L$ be the generator of a Dirichlet form $Q$  that  satisfies a 
$\ell_{\alpha,o}^\infty$-Sobolev inequality for some $\alpha\geq 0,o\in X$. Then, the spectrum of the generator $L$ is purely discrete and for all $k\geq 1$
\begin{equation}\label{eq:mainestimate}
\lambda_k\geq \frac{k}{S_{\infty,\alpha,o}(Q) m(X)}-\frac{\alpha}{m(o)},
\end{equation}
where $\lambda_k$ denotes the $k$-th eigenvalue of $L$  counted with multiplicity.
\end{theorem}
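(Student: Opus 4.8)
The plan is to absorb the point term $\alpha\abs{u(o)}^2$ into the form, reduce to the case $\alpha=0$, and then bound the trace of the inverse generator by plugging the Green's function into the Sobolev inequality.

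First I would set $\wt Q(u):=Q(u)+\alpha\abs{u(o)}^2$ with $D(\wt Q):=D(Q)$. Since $\alpha\abs{u(o)}^2=\tfrac{\alpha}{m(o)}\abs{u(o)}^2 m(o)\le\tfrac{\alpha}{m(o)}\norm{u}_2^2$, the form $u\mapsto\alpha\abs{u(o)}^2$ is bounded on $\ell^2(X,m)$, so $\wt Q$ is again a densely defined, closed, nonnegative form, and its generator is $\wt L=L+c$, where $c$ denotes multiplication by the function equal to $\alpha/m(o)$ at $o$ and $0$ elsewhere (a bounded, indeed rank-one, operator of norm $\alpha/m(o)$). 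By Definition~\ref{def:Sobolev}, $\wt Q$ satisfies the $\ell^\infty$-Sobolev inequality $\norm{u}_\infty^2\le S\,\wt Q(u)$ with $S:=S_{\infty,\alpha,o}(Q)$. Because $m(X)<\infty$ we also have $\norm{u}_2^2\le m(X)\norm{u}_\infty^2\le S\,m(X)\,\wt Q(u)$ for all $u\in D(Q)$, so $\wt L\ge\tfrac1{S\,m(X)}>0$; in particular $\wt L$ is invertible with bounded, positive inverse.

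The key step, and the one genuinely using the hypothesis, is the following diagonal bound. For $x\in X$ put $g_x:=\wt L^{-1}(\delta_x/m(x))\in D(\wt L)\subseteq D(\wt Q)$. Then $\wt Q(g_x)=\langle\wt L g_x,g_x\rangle=\langle\delta_x/m(x),g_x\rangle=g_x(x)\ge0$, and feeding $g_x$ into the $\ell^\infty$-Sobolev inequality gives $g_x(x)^2\le\norm{g_x}_\infty^2\le S\,\wt Q(g_x)=S\,g_x(x)$, hence $0\le g_x(x)\le S$. Using the orthonormal basis $(e_x)_{x\in X}$ of $\ell^2(X,m)$ with $e_x=m(x)^{-1/2}\delta_x$ one computes $\langle\wt L^{-1}e_x,e_x\rangle=m(x)\,g_x(x)\le S\,m(x)$, and therefore
\[
\operatorname{tr}(\wt L^{-1})=\sum_{x\in X}\langle\wt L^{-1}e_x,e_x\rangle\le S\sum_{x\in X}m(x)=S\,m(X)<\infty .
\]
Consequently $\wt L^{-1}$ is trace class, in particular compact; since $\wt L$ is injective, $\wt L$ has purely discrete spectrum $0<\lambda_1(\wt L)\le\lambda_2(\wt L)\le\cdots\to\infty$ (counted with multiplicity) with $\sum_{k\ge1}\lambda_k(\wt L)^{-1}=\operatorname{tr}(\wt L^{-1})\le S\,m(X)$, and monotonicity of the eigenvalues gives $k\,\lambda_k(\wt L)^{-1}\le\sum_{j=1}^k\lambda_j(\wt L)^{-1}\le S\,m(X)$, i.e.\ $\lambda_k(\wt L)\ge\tfrac{k}{S\,m(X)}$.

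Finally I would transfer this back to $L$. Since $\wt L=L+c$ with $c$ bounded, $L$ has compact resolvent as well, so its spectrum is purely discrete (or one simply invokes Theorem~\ref{spectral_bound_dist_gen}). From $\wt Q(u)=Q(u)+\alpha\abs{u(o)}^2\le Q(u)+\tfrac{\alpha}{m(o)}\norm{u}_2^2$ the min-max principle yields $\lambda_k(\wt L)\le\lambda_k(L)+\tfrac{\alpha}{m(o)}$, whence
\[
\lambda_k(L)\ge\lambda_k(\wt L)-\frac{\alpha}{m(o)}\ge\frac{k}{S_{\infty,\alpha,o}(Q)\,m(X)}-\frac{\alpha}{m(o)},
\]
which is the claimed estimate. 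The only nonroutine ingredient is the bound $g_x(x)\le S$ on the diagonal of the Green's function, obtained by testing the Sobolev inequality against $g_x$ itself; the remaining ingredients (form-boundedness of the perturbation, the identity $\operatorname{tr}(\wt L^{-1})=\sum_x m(x)g_x(x)=\sum_k\lambda_k(\wt L)^{-1}$, and the min-max comparison) are standard, but one should check carefully that $g_x$ lies in the form domain and that all these sums and traces are finite, which is precisely where $m(X)<\infty$ and $\wt L>0$ enter.
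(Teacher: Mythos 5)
Your argument is correct, but it follows a genuinely different route from the paper. The paper works directly with the eigenfunctions $\phi_1,\dots,\phi_k$ of $L$: it feeds $\varphi=\sum_j\overline{\phi_j(x)}\,\phi_j$ into the Sobolev inequality to obtain the pointwise bound $\sum_{j=1}^k\abs{\phi_j(x)}^2\le S\,(\lambda_k+\alpha/m(o))$ on the spectral function, and then sums over $x$ against $m$; discreteness of the spectrum is supplied separately by Proposition~\ref{spectrum_discrete} via Grothendieck factorization. You instead absorb the point perturbation into the form, test the Sobolev inequality against the Green's function $g_x=\wt L^{-1}(\delta_x/m(x))$ to get the diagonal bound $g_x(x)\le S$, and deduce $\operatorname{tr}(\wt L^{-1})\le S\,m(X)$; the eigenvalue bound then follows from monotonicity of the partial sums of $\lambda_j(\wt L)^{-1}$, and the min-max comparison $\lambda_k(\wt L)\le\lambda_k(L)+\alpha/m(o)$ transfers it back. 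The two arguments are dual in spirit (spectral function versus Green's function on the diagonal, both controlled by testing the inequality against the kernel itself) and yield the identical constant. Your version has the advantage of being self-contained --- it establishes discreteness of the spectrum without invoking Proposition~\ref{spectrum_discrete} --- and it gives the slightly stronger trace bound $\sum_k(\lambda_k+\alpha/m(o))^{-1}\le S\,m(X)$; the paper's version, in exchange, produces the pointwise estimate on $\sum_{j\le k}\abs{\phi_j(x)}^2$ and avoids the (routine but necessary) checks that $\wt Q$ is closed with generator $L+c$ and that $\wt L$ is boundedly invertible. All the steps you flag as needing care (that $g_x\in D(\wt Q)$, finiteness of the trace, the form-comparison via min-max) do go through as you describe.
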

 
\begin{remark}
Whether the bound in Theorem \ref{spectral_bound_lin} or in Theorem \ref{spectral_bound_dist_gen} is better for a given measure $m$ strongly depends on the 
distribution of $m$: If $m$ decays slowly, Theorem \ref{spectral_bound_dist_gen} does not imply linear growth of the eigenvalues.
Consider for example a uniformly transient graph, (see Section~\ref{section:uniform}), $(b,0)$ over $(\{x_n\}_{n=1}^\infty,m)$ with $m(x_n)=\frac{1}{n^a}$ and $a>1.$ 
Then estimate \eqref{eq:mainestimate} 
is better than \eqref{eq:keller} for $1<a<2,$ while the estimate \eqref{eq:keller} is better for $a>2.$
In particular, our estimate \eqref{eq:mainestimate} is complementary to the estimate \eqref{eq:keller_original} proven in \cite{KLSW17}. 
\end{remark}
\begin{remark}
In Lemma~\ref{no_upper_bound} we will show that by Theorem~\ref{spectral_bound_dist_gen}, there is no general upper bound on the growth of the eigenvalues.
\end{remark}

\begin{remark} The estimate on the eigenvalue growth of  graphs satisfying $ (S_{\infty}) $ is optimal in the exponent of $k$: For any $\varepsilon>0,$ there is a uniformly transient graph 
such that the eigenvalues of Dirichlet Laplacian $L^{(D)}$ satisfy $\lambda_k=o(k^{1+\varepsilon})$ as $k\to\infty,$ see Proposition~\ref{prop:sharpexample}.
\end{remark}

\begin{remark}
If $\alpha=0$ and $c=0$, the coefficient $(S_\infty(Q) m(X))^{-1}$ is optimal in the following sense: As proven in \cite[Theorem 5.4]{LSS18}, 
for every uniformly transient graph $(b,0)$ and $\varepsilon,M>0$ there exists a measure $m$ of full support with  mass $M=m(X)$ such that \begin{align*}
\lambda_1\leq \frac 1{S_{\infty}(Q) M}+\varepsilon.
\end{align*}
\end{remark}

\begin{remark} There is no uniform upper bound for the eigenvalue growth of  graphs satisfying  $ (S_{\infty}) $.  For any uniformly transient graph $(b,0)$ over $X$ (see Section~\ref{section:uniform} for definition) and any sequence $(a_k)$ of positive numbers, there exists a probability measure ${m}$ such that the eigenvalues $\lambda_k$ of $L^{(D)}$ satisfy
$
\frac{\lambda_k}{a_k}\to \infty, \mathrm{as}\ k\to\infty,
$ see Proposition~\ref{no_upper_bound}.

\end{remark}

In Riemannian geometry, Cheng and Li \cite{CL81} used the heat kernel method to estimate the eigenvalues of the Dirichlet Laplacian on a compact manifold with 
boundary by the Sobolev constant. Following their geometric argument, we estimate the eigenvalues of the generator of a Dirichlet form that 
satisfies \eqref{def:sb} for some $p\in (2,\infty]$. 

\begin{theorem}\label{thm:pSobolev} Let $(X,m)$ be a discrete measure space with finite  measure $ m(X)<\infty $. Let $L$ be the generator of  a Dirichlet form  $Q$ satisfying   \eqref{def:sb} for $p\in (2,\infty].$ 
 Then, the semigroup $(e^{-tL})_{t>0}$ is trace class. In particular, the spectrum of the generator $L$ of $Q$ is discrete and for any $k\geq 1,$
$$\frac1k\sum_{i=1}^k\lambda_i\geq \frac{1}{eS_p(Q)}\left(\frac{k}{m(X)}\right)^{1-\frac{2}{p}}.$$ In particular,
$$\lambda_k\geq \frac{1}{eS_p(Q)}\left(\frac{k}{m(X)}\right)^{1-\frac{2}{p}},$$ where $\lambda_k$ denotes the $k$-th eigenvalue of $L$ counted with multiplicity.
\end{theorem}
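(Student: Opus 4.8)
The plan is to follow the parabolic strategy of Cheng--Li \cite{CL81}: first upgrade the $\ell^p$-Sobolev inequality $(S_p)$ to an on-diagonal upper bound for the heat kernel $p_t(x,x)$, then integrate this bound against the measure to control the heat trace $\operatorname{tr}(e^{-tL})$, and finally convert the trace bound into a lower bound on the Ces\`aro means of the eigenvalues via a standard optimization in $t$.

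First I would recall that $(S_p)$ with $p\in(2,\infty]$ implies a Nash-type inequality, and hence, via the Davies--Moser iteration (or directly via the equivalence between Sobolev inequalities and ultracontractivity, see e.g. Varopoulos-type results), the ultracontractive estimate
\begin{equation*}
\norm{e^{-tL}}_{1\to\infty}\leq \left(\frac{S_p(Q)\,\nu}{2et}\right)^{\nu/2},\qquad \nu=\frac{2p}{p-2},
\end{equation*}
for all $t>0$, where $\nu/2 = p/(p-2)=1/(1-2/p)$ is the effective dimension; the constant should be tracked carefully so that $eS_p(Q)$ appears at the end. Since $(X,m)$ is discrete, $\norm{e^{-tL}}_{1\to\infty}$ is exactly $\sup_{x}p_t(x,x)/m(x)^{?}$ — more precisely the kernel $p_t(x,y)$ of $e^{-tL}$ with respect to $m$ satisfies $p_t(x,x)\le \norm{e^{-tL}}_{1\to\infty}$ pointwise by the semigroup property and self-adjointness ($p_t(x,x)=\norm{p_{t/2}(x,\cdot)}_2^2/$normalization, bounded by $\norm{e^{-t/2}}_{1\to\infty}^2$'s square-root trick). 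Then
\begin{equation*}
\operatorname{tr}(e^{-tL})=\sum_{x\in X}p_t(x,x)\,m(x)\leq m(X)\,\norm{e^{-tL}}_{1\to\infty}\leq m(X)\left(\frac{S_p(Q)\,\nu}{2et}\right)^{\nu/2}<\infty,
\end{equation*}
which in particular shows $e^{-tL}$ is trace class, hence $L$ has purely discrete spectrum $\lambda_1\le\lambda_2\le\cdots\to\infty$.

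Next, writing $\operatorname{tr}(e^{-tL})=\sum_{i}e^{-t\lambda_i}\ge\sum_{i=1}^{k}e^{-t\lambda_i}\ge k\,e^{-t\bar\lambda_k}$ where $\bar\lambda_k=\frac1k\sum_{i=1}^k\lambda_i$ (by convexity of $t\mapsto e^{-ts}$, i.e. Jensen), combine with the trace bound to get, for every $t>0$,
\begin{equation*}
\bar\lambda_k\ge \frac1t\log\!\left(\frac{k}{m(X)}\left(\frac{2et}{S_p(Q)\,\nu}\right)^{\nu/2}\right).
\end{equation*}
Optimizing over $t>0$: the right-hand side, as a function of $t$, is maximized at the $t$ making the argument of the logarithm equal to $e^{\nu/2}$, which gives $\bar\lambda_k\ge \frac{\nu}{2t_\ast}$ with $t_\ast$ chosen so that $\frac{2et_\ast}{S_p(Q)\nu}=(m(X)/k)^{2/\nu}$, yielding exactly $\bar\lambda_k\ge \frac{1}{eS_p(Q)}(k/m(X))^{2/\nu}=\frac{1}{eS_p(Q)}(k/m(X))^{1-2/p}$. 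The final bound $\lambda_k\ge\bar\lambda_k$ holds because the $\lambda_i$ are nondecreasing, so $\lambda_k$ dominates the average of the first $k$.

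The main obstacle I anticipate is the first step: deriving the sharp ultracontractive constant from $(S_p)$ with the precise factor $e$ and the correct power of $t$, uniformly over $p\in(2,\infty]$ including the endpoint $p=\infty$ (where $\nu=2$ and the estimate degenerates to $\norm{e^{-tL}}_{1\to\infty}\le S_\infty(Q)/t$ — this case may need a separate, more elementary argument since the Moser iteration is vacuous). One must be careful that the Sobolev inequality on a discrete space with finite measure genuinely yields ultracontractivity for \emph{all} $t>0$ (not just small $t$), which is where finiteness of $m(X)$ and the discreteness (so that $\ell^2(X,m)\hookrightarrow\ell^\infty$ locally) are used; the rest is a routine convexity/optimization argument. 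An alternative to Moser iteration that avoids constant-chasing is to quote the clean statement that $(S_p)$ is equivalent to $\norm{e^{-tL}}_{1\to\infty}\le C t^{-p/(p-2)}$ and then re-derive the optimal $C$ by testing the heat trace inequality back against $(S_p)$ applied to $p_{t}(x,\cdot)$ — essentially the Cheng--Li computation transplanted to the discrete setting.
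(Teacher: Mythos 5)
Your overall architecture is the same as the paper's: an on-diagonal heat kernel upper bound of the form $p_t(x,x)\leq C t^{-p/(p-2)}$, summed against $m$ to show $e^{-tL}$ is trace class, then Jensen's inequality $k e^{-2t\overline{\lambda}_k}\leq \sum_i e^{-2\lambda_i t}=\operatorname{tr}(e^{-2tL})$ and optimization in $t$. The trace-to-eigenvalue part of your argument is fine in outline. The genuine gap is the first step, which is the substantive content of the theorem: you do not derive the on-diagonal bound, you quote it from general Nash/Moser/Varopoulos theory with a specific constant $\bigl(\tfrac{S_p(Q)\nu}{2et}\bigr)^{\nu/2}$ that you admit you have not tracked. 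That constant cannot be right as stated: feeding it into your own optimization (done correctly) yields $\overline{\lambda}_k\geq \frac{1}{S_p(Q)}(k/m(X))^{1-2/p}$, i.e.\ a bound \emph{better} than the theorem by a factor of $e$, while the direct Cheng--Li computation only gives $p_{2t}(x,x)\leq\bigl(\tfrac{S_p(Q)\nu}{4t}\bigr)^{\nu/2}$, equivalently $p_s(x,x)\leq\bigl(\tfrac{S_p(Q)\nu}{2s}\bigr)^{\nu/2}$ without the $e$; the factor $e$ in the final constant is exactly the loss incurred at the optimization step. Your optimization as written is also internally inconsistent (you choose $t_\ast$ so that the argument of the logarithm is $1$, which makes the lower bound $0$, yet claim it equals $e^{\nu/2}$ and produces $\frac{1}{eS_p(Q)}$). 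The missing computation, which you gesture at only in your last sentence, is: set $f(t)=p_{2t}(x,x)$, show $f'(t)=-2Q(p_t(x,\cdot))$ by self-adjointness of the semigroup, apply $(S_p)$ to get $f'(t)\leq -2S_p(Q)^{-1}\norm{p_t(x,\cdot)}_p^2$, and then use H\"older interpolation together with the sub-Markov bound $\sum_y p_t(x,y)m(y)\leq 1$ to obtain $\norm{p_t(x,\cdot)}_p^2\geq f(t)^{2-2/p}$; integrating the resulting differential inequality $\bigl(f^{\frac{2}{p}-1}\bigr)'\geq 2S_p(Q)^{-1}(1-\tfrac2p)$ from $0$ to $t$ gives the on-diagonal bound with the correct constant. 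Without this, the theorem's explicit constant is unproven.

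A secondary gap: the endpoint $p=\infty$ is flagged but not resolved. The paper handles it by first showing (Proposition~\ref{prop:finite}) that $(S_\infty)$ together with $m(X)<\infty$ implies $(S_p)$ for every finite $p>2$ with $S_p(Q)\leq S_\infty(Q)m(X)^{2/p}$, and then letting $p\to\infty$ in the finite-$p$ estimate; no separate parabolic argument is needed. You should either adopt that reduction or supply the elementary argument you allude to.
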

 For finite graphs, a similar result to the theorem above is proven in \cite{CY95}. They use the geometric arguments of \cite{CL81}, as well. 
However, for infinite graphs this yields a new class of graphs with purely discrete spectrum.

\section{\texorpdfstring{Eigenvalue estimates for $p=\infty$}{Eigenvalue estimates for p=∞}}\label{sec:p=infty}

In this section we prove Theorem~\ref{spectral_bound_dist_gen} and Theorem~\ref{spectral_bound_lin}.  So, we are interested in Dirichlet forms that satisfy the Sobolev inequality in Definition~\ref{def:Sobolev} with $ p=\infty $. 
We first show that an  $\ell^\infty_{\alpha,o}$-Sobolev inequality implies that the generator of the form has discrete spectrum.

\begin{proposition}\label{spectrum_discrete}
Let $(X,m)$ be a discrete measure space with finite  measure $ m(X)<\infty $. Let $L$ be the generator of a Dirichlet form $Q$  that satisfies a
$\ell^\infty_{\alpha,o}$-Sobolev inequality for some $\alpha\geq 0$, $o\in X$. Then, the semigroup $(e^{-tL})_{t>0}$ is trace class. In particular, the spectrum of $L$ is purely discrete.
\end{proposition}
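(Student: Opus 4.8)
The plan is to show that the semigroup $e^{-tL}$ maps $\ell^1(X,m)$ boundedly into $\ell^\infty(X,m)$ for every $t>0$, to turn this into a uniform bound on the diagonal $p_t(x,x)$ of the associated heat kernel, and then to read off finiteness of $\operatorname{tr}(e^{-tL})=\sum_{x}p_t(x,x)m(x)$ from $m(X)<\infty$. Once $e^{-tL}$ is trace class it is in particular compact, which gives discreteness of the spectrum. This is the standard ``ultracontractivity plus finite measure implies trace class'' mechanism, adapted to the $\ell^\infty$-Sobolev inequality.

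First I would absorb the perturbation $\abs{u(o)}^2$ into a spectral shift. Since $\abs{u(o)}^2\le\norm{u}_2^2/m(o)$, the $\ell^\infty_{\alpha,o}$-Sobolev inequality yields
\[
\norm{u}_\infty^2\le S\Bigl(Q(u)+\tfrac{\alpha}{m(o)}\norm{u}_2^2\Bigr)=S\,\norm{A^{1/2}u}_2^2,\qquad S:=S_{\infty,\alpha,o}(Q),\ \ A:=L+\tfrac{\alpha}{m(o)}I,
\]
for all $u$ in the form domain of $Q$, which coincides with the form domain of the closed form associated with $A$. Combining this with $\norm{u}_2^2\le m(X)\norm{u}_\infty^2$ gives $\norm{u}_2^2\le S\,m(X)\langle Au,u\rangle$, hence $A\ge(S\,m(X))^{-1}>0$; in particular $A^{-1/2}$ is a bounded operator on $\ell^2(X,m)$ and the displayed inequality says precisely that $A^{-1/2}\colon\ell^2(X,m)\to\ell^\infty(X,m)$ is bounded with norm $\le S^{1/2}$. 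Dualizing, and using that $A^{-1/2}$ is self-adjoint on $\ell^2(X,m)$ while $\ell^1(X,m)$ is the predual of $\ell^\infty(X,m)$, one also obtains $A^{-1/2}\colon\ell^1(X,m)\to\ell^2(X,m)$ bounded with norm $\le S^{1/2}$.

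Next I would factor $e^{-tA}=A^{-1/2}\,(Ae^{-tA})\,A^{-1/2}$. The middle factor is bounded on $\ell^2(X,m)$ with $\norm{Ae^{-tA}}\le\sup_{\lambda>0}\lambda e^{-t\lambda}=(et)^{-1}$ by the spectral theorem, so composing with the two maps from the previous step gives $e^{-tA}\colon\ell^1(X,m)\to\ell^\infty(X,m)$ bounded with norm $\le S/(et)$. Testing against the indicator $\mathbf 1_{\{y\}}$, which has $\ell^1(X,m)$-norm $m(y)$, shows that the kernel $p_t$ of $e^{-tA}$ with respect to $m$, defined by $(e^{-tA}f)(x)=\sum_{y}p_t(x,y)f(y)m(y)$, satisfies $\sup_{x,y}\abs{p_t(x,y)}\le S/(et)$; since $A$ differs from $L$ by a nonnegative scalar, $e^{-tA}$ is positivity preserving, so $0\le p_t(x,x)\le S/(et)$. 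Expanding the trace in the orthonormal basis $\bigl(m(x)^{-1/2}\mathbf 1_{\{x\}}\bigr)_{x\in X}$ of $\ell^2(X,m)$ and using positivity of $e^{-tA}$,
\[
\operatorname{tr}\bigl(e^{-tA}\bigr)=\sum_{x\in X}p_t(x,x)\,m(x)\le\frac{S}{et}\,m(X)<\infty .
\]
Hence $e^{-tA}$, and therefore $e^{-tL}=e^{(\alpha/m(o))t}e^{-tA}$, is trace class for every $t>0$, and compactness of $e^{-tL}$ forces $\sigma(L)$ to be purely discrete.

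I do not expect a genuine obstacle here. The only step that needs a little care is the spectral shift at the beginning: one must make sure that $A^{-1/2}$ is an honest bounded operator, i.e. that $A$ has a strictly positive spectral gap, and the point is that this is not an additional hypothesis but follows automatically by feeding the finiteness of $m$ back into the Sobolev inequality. Everything else — the $\ell^2$-boundedness of $Ae^{-tA}$, the $\ell^1$–$\ell^\infty$ duality, and the identification of the trace with the diagonal sum for a positive operator — is routine, and the argument runs in parallel to the heat-kernel estimate behind Theorem~\ref{thm:pSobolev}.
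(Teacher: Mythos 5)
Your proof is correct, but it follows a genuinely different route from the paper. The paper's argument is soft: it observes that $e^{-tL}$ maps $\ell^2(X,m)$ into $D(Q)\subseteq\ell^\infty(X)$, upgrades this to boundedness via the closed graph theorem, composes with the embedding $\ell^\infty(X)\hookrightarrow\ell^2(X,m)$ (finite measure), and invokes the Grothendieck factorization principle to conclude that $e^{-tL}$ is Hilbert--Schmidt, whence $e^{-tL}=e^{-tL/2}e^{-tL/2}$ is trace class. You instead prove an explicit ultracontractivity bound: after shifting to $A=L+\tfrac{\alpha}{m(o)}I$ (and correctly noting that the finite measure feeds back into the Sobolev inequality to give $A\geq (S\,m(X))^{-1}>0$, so $A^{-1/2}$ is bounded), you read the Sobolev inequality as boundedness of $A^{-1/2}\colon\ell^2\to\ell^\infty$, dualize, and factor $e^{-tA}=A^{-1/2}(Ae^{-tA})A^{-1/2}$ to get $\norm{e^{-tA}}_{1\to\infty}\leq S/(et)$ and hence the on-diagonal bound $p_t(x,x)\leq S/(et)$ and $\operatorname{tr}(e^{-tL})\leq e^{\alpha t/m(o)}\,S\,m(X)/(et)$. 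This is essentially the $p=\infty$ instance of the Cheng--Li/Nash mechanism the paper deploys in Section~\ref{sec:p>2} (there via a differential inequality for $t\mapsto p_{2t}(x,x)$ and a limit $p\to\infty$), so your argument buys more than the paper's Proposition~\ref{spectrum_discrete}: the quantitative trace bound immediately yields eigenvalue lower bounds of the type in Theorem~\ref{thm:pSobolev}, whereas the Grothendieck route gives trace class with no explicit constants. All the steps you flag as routine (density of finitely supported functions in $\ell^1$ to justify the dual bound, $\norm{Ae^{-tA}}_{2\to2}\leq(et)^{-1}$, positivity of the diagonal and basis-independence of the trace of a positive operator) are indeed unproblematic.
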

We follow the proof given in \cite[Theorem 5.1]{GHKLW15}, where this result is proven for canonically compactifiable graphs, see Section~\ref{section:canon}.

\begin{proof}
For all $t>0$ we have
$$e^{-tL}\ell^2(X,m)\subseteq D(L)\subseteq D(Q)\subseteq \ell^\infty(X),$$
where the last inclusion is a consequence of the
$\ell^\infty_{\alpha,o}$-Sobolev inequality. An application of the closed graph theorem yields that $e^{-tL}$ is a bounded operator from $\ell^2(X,m)$ to $\ell^\infty(X)$.
Moreover, by the finiteness of the measure, we have a continuous embedding $j$ of $\ell^\infty(X)$ in $\ell^2(X,m)$. Hence, $e^{-tL}=je^{-tL}$ is a composition of a 
continuous map from $\ell^\infty(X)$ to $\ell^2(X,m)$ and a continuous map from $\ell^2(X,m)$ to $\ell^\infty(X)$. Thus, $e^{-tL}$ is Hilbert-Schmidt by the Grothendieck factorization
principle, see  \cite{Sto94}. Since $t>0$ was arbitrary, we infer that $e^{-tL}=e^{-\frac12 tL}e^{-\frac12 tL}$ is trace class as a product of two Hilbert-Schmidt operators.

The ``in particular''-part is clear.
\end{proof}

\begin{proof}[Proof of Theorem~\ref{spectral_bound_dist_gen}]
 
 We make use of the Courant-Fischer-Weyl min-max principle and the $\ell^\infty_{\alpha,o}$-Sobolev inequality with constant $S:=S_{\infty,\alpha,o}(Q)$ and calculate 
\begin{align*}
\lambda_{k+1}&=\sup_{f_1,\ldots,f_k\in\ell^2(X,m)}\inf\left\{\frac{Q(f)}{\|f\|_2^2}\colon f\in D(Q), f\perp \operatorname{span}\{f_1, \ldots, f_k\}\right\}\\
&\geq \inf\left\{\frac{Q(f)}{\|f\|_2^2}\colon f\in D(Q), f(x_1)=\ldots=f(x_k)=0\right\}\\
&= \inf\left\{\frac{Q(f)+\alpha|f(x_1)|^2}{\|f\|_2^2}\colon f\in D(Q), f(x_1)=\ldots=f(x_k)=0\right\}\\
&= \inf\left\{\frac{Q(f)+\alpha|f(o)|^2}{\|f\|_2^2}\colon f\in D(Q), f(o)=f(x_2)=\ldots=f(x_k)=0\right\}\\
&\geq \inf\left\{\frac{\|f\|_\infty^2}{S\|f\|_2^2}\colon f\in D(Q), f(x_1)=\ldots=f(x_k)=0\right\}.
\end{align*}
Finally, note that $\|f\|_2^2\leq m(X\setminus\{x_1,\ldots, x_k\})\|f\|_\infty^2$ if $f(x_1)=\ldots=f(x_k)=0$. This concludes the proof.
\end{proof}

\begin{proof}[Proof of Theorem~\ref{spectral_bound_lin}]
By Proposition~\ref{spectrum_discrete} the spectrum of $L$ is purely discrete. 
Let $(\phi_j)$ be an orthonormal basis of $\ell^2(X,m)$ consisting of eigenfunctions of $L$ corresponding to the eigenvalues $(\lambda_j)$. 
For $k\in\mathbb{N}$ and $\beta_1,\dots,\beta_k\in\IC$, let $$ \varphi=\sum_{j=1}^k \beta_j \phi_j. $$
Denote $S=S_{\infty,\alpha,o}(Q)$. By the $\ell_{\alpha,o}^\infty$-Sobolev inequality we have
\begin{align*}
\norm{\varphi}_\infty^2&\leq S(Q(\varphi)+\alpha|\varphi(o)|^2)\\
&=S( \langle L\varphi,\varphi\rangle+\alpha|\varphi(o)|^2)\\
&\leq S\left( \langle L\varphi,\varphi\rangle+\frac{\alpha\|\varphi\|_2^2}{m(o)}\right)\\
&=S\sum_{j=1}^k \left(\lambda_j+\frac{\alpha}{m(o)}\right)\abs{\beta_j}^2\\
&\leq S\left(\lambda_k+\frac{\alpha}{m(o)}\right)\norm{\varphi}_2^2.
\end{align*}
Now, fix $x\in X$ and let $\beta_j=\overline{\phi_j(x)}$. Then,
\begin{align*}
\sum_{j=1}^k\abs{\phi_j(x)}^2 =\varphi(x) \leq  \norm{\varphi}_\infty &\leq S^{1/2}\left(\lambda_k+\frac{\alpha}{m(o)}\right)^\frac12 \norm{\varphi}_2\\
&=S^{1/2}\left(\lambda_k+\frac{\alpha}{m(o)}\right)^\frac12\left(\sum_{j=1}^k \abs{\phi_j(x)}^2\right)^{1/2}.
\end{align*}
Thus, 
\begin{align*}
\sum_{j=1}^k \abs{\phi_j(x)}^2 \leq S \left(\lambda_k+\frac{\alpha}{m(o)}\right) 
\end{align*}
for every $x\in X$. Summing over all $x\in X$, we obtain
\begin{align*}
k=\sum_{j=1}^k \norm{\phi_j}_2^2=\sum_{x\in X}m(x)\sum_{j=1}^k \abs{\phi_j(x)}^2 \leq S \left(\lambda_k+\frac{\alpha}{m(o)}\right) m(X).
\end{align*}
Therefore,
\begin{flalign*}
&&\lambda_k\geq \frac{k}{S m(X)}-\frac{\alpha}{m(o)}.&&\qedhere
\end{flalign*}
\end{proof}

\section{\texorpdfstring{Eigenvalue estimates for $p>2$}{Eigenvalue estimates for p>2}}\label{sec:p>2}
In this section, we use the approach of Cheng and Li \cite{CL81} to estimate the eigenvalues of generators of Dirichlet forms that
satisfy $\ell_{\alpha,o}^p$-Sobolev inequalities
for $p\in(2,\infty]$. In particular, we prove Theorem~\ref{thm:pSobolev}.

Let $Q$ be a Dirichlet form on  $\ell^2(X,m)$, $L$ the infinitesimal generator, and  $e^{-tL}$ the semigroup associated to the Dirichlet form $Q.$ 
The \emph{heat kernel} of $Q$ is given by 
$$p_t(x,y)=\frac{1}{m(y)}(e^{-tL}1_y)(x),\quad x,y\in X.$$
The semigroup $e^{-tL}$ can be expressed in terms of the heat kernel as
\[ e^{-tL}f(x) = \sum_{y \in X} p_t(x,y) f(y) m(y) \]
for all $f \in \ell^2(X,m)$.

We first show that $(S_\infty)$ implies \eqref{def:sb} for any $p>2.$
\begin{proposition}\label{prop:finite}Let $(X,m)$ be a discrete measure space with finite   measure $ m(X)<\infty $. 
Suppose that $Q$ satisfies $(S_\infty)$ with $\ell^\infty$-Sobolev constant $S_\infty(Q)$, then
it satisfies $(S_p)$ for any $p>2$ with $\ell^p$-Sobolev constant $$S_p(Q)\leq S_\infty(Q)m(X)^{\frac2p}.$$
\end{proposition}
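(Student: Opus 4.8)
The plan is to interpolate between the trivial $\ell^2$-bound and the $\ell^\infty$-Sobolev inequality. First I would record the elementary estimate
\[
\|u\|_p^p \le \|u\|_\infty^{p-2}\|u\|_2^2
\]
for any $u\in\ell^p(X,m)$ with $p\in(2,\infty)$, which holds pointwise after summing since $\abs{u(x)}^p \le \|u\|_\infty^{p-2}\abs{u(x)}^2$ and then integrating against $m$. Next I would bring in the two available norm controls: the hypothesis $(S_\infty)$ gives $\|u\|_\infty^2 \le S_\infty(Q)\,Q(u)$, while finiteness of the measure gives $\|u\|_2^2 \le m(X)\,\|u\|_\infty^2 \le m(X)\,S_\infty(Q)\,Q(u)$.

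Substituting both into the interpolation inequality yields
\[
\|u\|_p^p \le \bigl(S_\infty(Q)\,Q(u)\bigr)^{\frac{p-2}{2}}\cdot m(X)\,S_\infty(Q)\,Q(u)
= m(X)\,S_\infty(Q)^{\frac p2}\,Q(u)^{\frac p2}.
\]
Taking $p$-th roots and then squaring gives $\|u\|_p^2 \le \bigl(m(X)\,S_\infty(Q)^{p/2}\bigr)^{2/p} Q(u) = S_\infty(Q)\,m(X)^{2/p}\,Q(u)$, which is exactly $(S_p)$ with the claimed constant $S_p(Q)\le S_\infty(Q)\,m(X)^{2/p}$. One should note at the outset that $(S_\infty)$ already forces $u\in\ell^\infty(X)$ for all $u$ in the form domain, and finiteness of $m$ then places $u$ in $\ell^2(X,m)$ and hence in every $\ell^p(X,m)$, so all quantities appearing are finite and the manipulations are legitimate on the whole form domain.

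There is essentially no hard part here — the argument is a one-line Hölder/interpolation computation — so the main thing to be careful about is bookkeeping of exponents and making sure the constant is presented as the minimal-constant bound rather than merely some admissible constant; since $S_p(Q)$ is defined as the infimum of admissible $S$, exhibiting one admissible value $S_\infty(Q)\,m(X)^{2/p}$ immediately gives the stated inequality for $S_p(Q)$. One could alternatively phrase the interpolation step via Hölder's inequality with exponents $\tfrac{p}{p-2}$ and $\tfrac p2$ applied to $\abs{u}^p = \abs{u}^{p-2}\cdot\abs{u}^2$, but the pointwise bound by $\|u\|_\infty^{p-2}$ is cleaner and avoids introducing conjugate exponents.
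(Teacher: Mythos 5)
Your proof is correct and, despite the detour through the interpolation bound $\|u\|_p^p\le\|u\|_\infty^{p-2}\|u\|_2^2$ and the $\ell^2$-norm, it collapses to the paper's one-line argument: both amount to $\|u\|_p^2\le m(X)^{2/p}\|u\|_\infty^2$ (finiteness of the measure) followed by $(S_\infty)$, yielding the same constant $S_\infty(Q)\,m(X)^{2/p}$.
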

\begin{proof} For any $p>2,$ $f\in D(Q),$ finiteness of the measure and  $(S_\infty)$ yields
$$\|f\|_p^2\leq m(X)^{\frac2p}\|f\|_{\infty}^2\leq S_\infty(Q)m(X)^{\frac2p}Q(f)^{\frac12}.$$ This proves the proposition.
\end{proof}

Adapting the strategy from \cite{CL81}, we derive from the Sobolev inequality \eqref{def:sb} an on-diagonal heat kernel estimate that yields 
a lower bound on the eigenvalues.

\begin{proof}[Proof of Theorem~\ref{thm:pSobolev}] 
First let $2<p<\infty$.
 For $x\in X$, we write 
$$f\colon (0,\infty)\lra (0,\infty),\quad f(t)=p_{2t}(x,x)=\frac{1}{m(x)}(e^{-2tL}1_x)(x).$$
We calculate the derivative of $f$.
We have
$$f'(t)=-2\frac{1}{m(x)}(Le^{-2tL}1_x)(x),$$
since $L$ is the generator of $e^{-tL}$. Moreover, from $e^{-tL}1_x\in D(L)$, we deduce $Le^{-2tL}1_x=e^{-tL}Le^{-tL}1_x$ and, thus,
$$f'(t)=-2\frac{1}{m(x)}(e^{-tL}Le^{-tL}1_x)(x)=-2\frac{1}{m(x)^2}\langle e^{-tL}Le^{-tL}1_x,1_x\rangle.$$
 Using the self-adjointness of the semigroup, we infer
$$f'(t)= -2\frac{1}{m(x)^2}\langle Le^{-tL}1_x, e^{-tL}1_x\rangle=-2\sum_{y\in X}p_{t}(x,y)Lp_t(x,\cdot)(y)m(y)$$ for every $t>0$.

Thus, by Green's formula and the $\ell^p$-Sobolev inequality \eqref{def:sb},
\begin{align*}
f'(t)
&=-2Q(p_{t}(x,\cdot))\leq -2 (S_p(Q))^{-1} \|p_{t}(x,\cdot)\|_p^2\\&\leq-2 (S_p(Q))^{-1} \left(\sum_{y\in X}p_{t}^2(x,y)m(y) \right)^{2-\frac2p}=-2(S_p(Q))^{-1} f(t)^{2-\frac{2}{p}},
\end{align*}
 where we have used the H\"older inequality and the fact $$\sum_{y\in X}p_{t}(x,y)m(y)\leq 1$$ in the last inequality. For $p>2,$ this implies that 
$$\left(f(t)^{\frac2p-1}\right)'\geq 2(S_p(Q))^{-1}\left(1-\frac2p\right).$$ Integrating the variable $t$ from $0$ to $t$ on both sides, we have
$$f(t)^{\frac2p-1}\geq f(0)^{\frac2p-1}+2(S_p(Q))^{-1}\left(1-\frac2p\right)t\geq 2(S_p(Q))^{-1}\left(1-\frac2p\right)t.$$ This yields
$$p_{2t}(x,x)=f(t)\leq C_1 t^{-\frac{p}{p-2}},$$ where $C_1=\left(\frac{S_p(Q)}{2\left(1-\frac2p\right)}\right)^{\frac{p}{p-2}}.$ 
Summing over $x\in X$ with weights $m$ on both sides of the above inequality, we get
$$\sum_{y\in X}p_{2t}(x,x) m(x)\leq C_1m(X)t^{-\frac{p}{p-2}}.$$ 
In particular, this gives $$\sum_{x\in X}\sum_{y\in X}p_t(x,y)^2 m(y)m(x)=\sum_{x\in X} p_{2t}(x,x)m(x)<\infty$$ and, hence, 
$p_t(x,y)$ is a Hilbert-Schmidt kernel for every $t>0$. Thus, $(e^{-tL}$ are Hilbert-Schmidt operators and, therefore, $e^{-tL}=e^{-\frac12 tL}e^{-\frac12 tL}$ is trace class as a product of two Hilbert-Schmidt operators. Hence, $L$ has purely discrete spectrum.

We denote $$\overline{\lambda}_k:=\frac1k\sum_{i=1}^k\lambda_i.$$
By the trace formula for Hilbert-Schmidt operators and by the convexity of the function $x\mapsto e^{-x}$, we calculate 
$$ke^{-2\overline{\lambda}_k t}\leq \sum_{i=1}^{k}e^{-2\lambda_i t}\leq \sum_{i\geq 1}e^{-2\lambda_i t}=\sum_{x\in X}p_{2t}(x,x) m(x).$$ 
This implies that for any $t\geq 0,$
$$k\leq C_1 m(X)e^{2\overline{\lambda}_k t}t^{-\frac{p}{p-2}}.$$ Minimizing the right hand side for $t\geq 0,$ i.e., setting 
$t=\frac{p}{2\overline{\lambda}_k(p-2)},$ we get 
$$\overline{\lambda}_k\geq \frac{1}{eS_p(Q)}\left(\frac{k}{m(X)}\right)^{1-\frac{2}{p}}.$$ This proves the first assertion. 
The second assertion follows from $\lambda_k\geq \overline{\lambda}_k.$

Finally, we discuss the case $p=\infty$.
 By Proposition~\ref{prop:finite}, the graph $(b,c)$ over $ (X,m) $ satisfies \eqref{def:sb} for every $p>2$ with $\ell^p$-Sobolev constant 
$$S_p(Q)\leq S_\infty(Q)m(X)^{\frac2p}.$$ By passing to the limit, $p\to\infty,$ in the estimate we have proven above the result follows.
\end{proof}

\section{Application to graphs}\label{sec:ex}
In this section, we apply eigenvalue estimates to uniformly transient graphs and canonically compactifiable graphs respectively.
For this, we recall some basics concerning Dirichlet forms on graphs. 
The articles \cite{KL10,KL12} can serve as a reference.

Let $X$ be an at most countable set. A \emph{graph over $X$} is a pair $(b,c)$ consisting of a symmetric function $b\colon X\times X\lra[0,\infty)$ that vanishes on the diagonal and satisfies
$$\sum_y b(x,y)<\infty$$ for all $x\in X$,
and a function $c\colon X\lra [0,\infty)$. We call $b$  \emph{edge weight} and $c$ the \emph{killing term}. 

Two vertices $x$, $y$ are called \emph{adjacent}, written $x\sim y$, if $b(x,y)>0$. In this way, a graph $(b,c)$ over $X$ induces a simple, undirected graph in the combinatorial sense with vertex set $X$ and edge set $E=\{(x,y)\in X\times X\mid x\sim y\}$. The graph $(b,c)$ is called \emph{connected} if for all $x,y\in X$ there exist $x_1,\dots,x_n\in X$ such that $x\sim x_1$, $x_n\sim y$ and $x_i\sim x_{i+1}$ for $1\leq i\leq n-1$.

We  equip $X$  with the discrete topology and denote by $C(X)$, $C_c(X)$  for the space of all, all finitely supported functions on $X$. A central object in the analysis on graphs is the \emph{energy form associated with $(b,c)$}. It is defined as
\begin{align*}
\tilde Q\colon C(X)\lra [0,\infty],\quad f\mapsto \frac 1 2\sum_{x,y\in X}b(x,y)\abs{f(x)-f(y)}^2+\sum_{x\in X} c(x)\abs{f(x)}^2.
\end{align*}
The map $ \tilde Q $ is a quadratic form with domain
\begin{align*}
\tilde D=\{f\in C(X)\mid \tilde Q(f)<\infty\}
\end{align*}
is called the space of \emph{functions of finite energy}. A crucial property of $\tilde Q$ is that it is \emph{Markovian}, that is,
\begin{align*}
\tilde Q(C\circ f)\leq \tilde Q(f)
\end{align*}
for all $f\in C(X)$ and all normal contractions $C\colon \IC\lra\IC$. Moreover, by Fatou's lemma, $\tilde Q$ is sequentially lower semicontinuous with respect to pointwise convergence.
For $o\in X$, define
\begin{align*}
\norm\cdot_o\colon \tilde D\lra [0,\infty),\,f\mapsto (\tilde Q(f)+\abs{f(o)}^2)^{1/2}.
\end{align*}
It is easy to see that $\norm\cdot_o$ is a norm, and if $(b,c)$ is connected, then $\norm\cdot_o$ and $\norm\cdot_{o'}$ are equivalent for any $o,o'\in X$ and we denote  $$ \tilde D_0=\overline{C_c(X)}^{\norm\cdot_o} .$$  

Let $m\colon X\lra (0,\infty)$ such that the pair $(X,m)$ is a discrete measure space.

The restriction $Q^{(N)}$ of $\tilde Q$ to $\ell^2(X,m)$ is a closed, densely defined quadratic form in $\ell^2(X,m)$, called the \emph{Dirichlet form with Neumann boundary conditions}.
 Accordingly, the restriction of $\tilde Q$ to $C_c(X)$ is closable in $\ell^2(X,m)$, and its closure $Q^{(D)}$ is called the \emph{Dirichlet form with Dirichlet boundary conditions}.
 It turns out (see \cite[Appendix A]{KLSW17}) that
$$D(Q^{(D)})=\tilde D_0\cap\ell^2(X,m).$$
We write $L^{(D)}$ and $L^{(N)}$ for the  generators of $Q^{(D)}$ and $Q^{(N)}$ (which are {positive} operators on $\ell^2$) and call them the \emph{Laplacian with Dirichlet} respectively\emph{ Neumann boundary conditions}.

For any closed quadratic form $Q$ on $\ell^2(X,m)$ we write $\norm{\cdot}_Q$ for the \emph{form norm} given by
\begin{align*}
\norm{f}_Q=(Q(f)+\norm{f}_2^2)^{1/2}.
\end{align*}
The Dirichlet form $Q^{(D)}$ is regular, that is, $D(Q^{(D)})\cap C_c(X)$ is dense both in $C_c(X)$ with respect to $\norm\cdot_\infty$ and in $D(Q^{(D)})$ with respect to $\norm\cdot_{Q^{(D)}}$. Conversely, every regular Dirichlet form on a discrete measure space $(X,m)$ is given by $Q^{(D)}$ for some graph $(b,c)$ over $X$, see \cite{KL12}.

\subsection{Uniformly Transient Graphs}\label{section:uniform}
A connected graph $(b,c)$ over a countably infinite set $X$ is called \emph{uniformly transient} if there is a constant $C>0$ such that   $$\|f\|_\infty^2\leq C\tilde{Q}(f)$$ 
holds for every $f\in\tilde D_0$. 
Uniformly transient graphs are studied in \cite{KLSW17}. 
By the definition of uniformly transient graphs and since $D(Q^{(D)})=\tilde D_0\cap \ell^2(X,m)$ holds, one immediately gets the following lemma. 
\begin{lemma}\label{Sobolev_embedding_unif}
If $(b,c)$ is a connected uniformly transient graph over $ (X,m) $, then there exists a constant $C>0$ such that for all $o\in X$ the Dirichlet form $Q^{(D)}$ satisfies a $\ell^\infty_{\alpha,o}$-Sobolev inequality, i.e.,
\begin{align*}
\norm{f}_\infty^2\leq C Q^{(D)}(f)
\end{align*}
holds for all $f\in D(Q^{(D)})$. Moreover, the constant $C$ is independent of the choice of the  measure $m$.
\end{lemma}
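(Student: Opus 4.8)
The plan is to unwind the definitions and show that the uniform transience inequality, stated for the energy form $\tilde Q$ on $\tilde D_0$, transfers verbatim to the Dirichlet form $Q^{(D)}$ on its form domain. The crucial structural input, already recorded in the excerpt, is the identification $D(Q^{(D)}) = \tilde D_0 \cap \ell^2(X,m)$, together with the fact that $Q^{(D)}$ acts as $\tilde Q$ on this domain. So first I would fix a connected uniformly transient graph $(b,c)$ over $(X,m)$, and let $C>0$ be the constant from the definition of uniform transience, i.e.\ $\|f\|_\infty^2 \le C\tilde Q(f)$ for all $f \in \tilde D_0$. This constant depends only on the graph $(b,c)$ and not on $m$, since $\tilde D_0$ and $\tilde Q$ are defined purely in terms of $(b,c)$; this will immediately give the ``moreover'' clause.

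Next, I would take an arbitrary $o \in X$ and an arbitrary $f \in D(Q^{(D)})$. By the domain identification, $f \in \tilde D_0$, so the uniform transience estimate applies and yields $\|f\|_\infty^2 \le C\tilde Q(f)$. Since $Q^{(D)}(f) = \tilde Q(f)$ for $f \in D(Q^{(D)})$, this is exactly $\|f\|_\infty^2 \le C\, Q^{(D)}(f)$, which is the $\ell^\infty_{\alpha,o}$-Sobolev inequality in the form stated (with the $\alpha|f(o)|^2$ term simply dropped, i.e.\ the inequality holds for every $\alpha \ge 0$ and in fact already for $\alpha = 0$, since adding a nonnegative term to the right-hand side only weakens it). The independence of $o$ is then manifest: the same constant $C$ works for every base point, because the left-hand side $\|f\|_\infty^2$ does not involve $o$ at all.

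I expect there to be essentially no obstacle here — the lemma is a direct translation exercise, and the excerpt explicitly flags it as something one ``immediately gets.'' The only point requiring any care is making sure one is entitled to use the domain identification $D(Q^{(D)}) = \tilde D_0 \cap \ell^2(X,m)$ and the fact that $Q^{(D)}$ restricts $\tilde Q$; both are cited from \cite{KLSW17} (Appendix A) and stated in the preceding text, so they may be invoked freely. A secondary cosmetic point is to phrase the conclusion so that it matches Definition~\ref{def:Sobolev}: one should note that $\|f\|_\infty^2 \le C Q^{(D)}(f) \le C(Q^{(D)}(f) + \alpha|f(o)|^2)$ for any $\alpha \ge 0$, so that $S_{\infty,\alpha,o}(Q^{(D)}) \le C$ for all $\alpha, o$. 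That is the whole argument.
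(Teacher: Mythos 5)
Your proposal is correct and matches the paper's reasoning exactly: the paper also obtains the lemma ``immediately'' from the uniform transience inequality on $\tilde D_0$ together with the identification $D(Q^{(D)})=\tilde D_0\cap\ell^2(X,m)$ (and the fact that $Q^{(D)}$ restricts $\tilde Q$), with the $m$-independence of $C$ coming from the fact that uniform transience involves only $(b,c)$.
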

The previous lemma combined with Proposition~\ref{spectrum_discrete} immediately yields that on a uniformly transient graph the operator $ L^{(D)} $ associated to $Q^{(D)}$ has purely discrete spectrum. Moreover, we can 
apply Theorem~\ref{spectral_bound_lin}, which immediately gives the following corollary.
\begin{corollary}\label{Dirichlet_lower_bound_lin}
Let $(b,c)$ be a uniformly transient graph over $ (X,m) $ with finite  measure $ m(X)<\infty $. Let $L^{(D)}$ be the generator of $Q^{(D)}$. Then, $ L^{(D)} $ has purely discrete spectrum and 
for all $k\in\mathbb{N}$. 
\begin{align*}
\lambda_k\geq \frac{k}{C m(X)}
\end{align*}
where $\lambda_k$ denotes the $k$-th eigenvalue of $L^{(D)}$  counted with multiplicity.
 \end{corollary}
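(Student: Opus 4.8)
The plan is to observe that Corollary~\ref{Dirichlet_lower_bound_lin} is essentially an immediate consequence of results already established, so the proof should amount to checking that the hypotheses line up correctly. First I would invoke Lemma~\ref{Sobolev_embedding_unif}: since $(b,c)$ is a connected uniformly transient graph over $(X,m)$, there is a constant $C>0$, independent of the measure $m$, such that the Dirichlet form $Q^{(D)}$ satisfies the $\ell^\infty_{\alpha,o}$-Sobolev inequality $\norm{f}_\infty^2\leq C Q^{(D)}(f)$ for all $f\in D(Q^{(D)})$. This is precisely a $\ell^\infty_{\alpha,o}$-Sobolev inequality in the sense of Definition~\ref{def:Sobolev} with the choice $\alpha=0$ (and any $o\in X$), and the minimal such constant satisfies $S_{\infty,0,o}(Q^{(D)})\leq C$.

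Next I would apply Theorem~\ref{spectral_bound_lin} to the generator $L=L^{(D)}$ of the Dirichlet form $Q=Q^{(D)}$. The measure is finite by hypothesis, $m(X)<\infty$, and we have just verified the $\ell^\infty_{\alpha,o}$-Sobolev inequality with $\alpha=0$. Theorem~\ref{spectral_bound_lin} then yields both that the spectrum of $L^{(D)}$ is purely discrete and that
\begin{align*}
\lambda_k\geq \frac{k}{S_{\infty,0,o}(Q^{(D)})\, m(X)}-\frac{0}{m(o)}=\frac{k}{S_{\infty,0,o}(Q^{(D)})\, m(X)}
\end{align*}
for all $k\geq 1$. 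Since $S_{\infty,0,o}(Q^{(D)})\leq C$, the right-hand side is bounded below by $k/(C\,m(X))$, which is exactly the claimed estimate.

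There is essentially no obstacle here; the only thing to be careful about is bookkeeping. One should note that the constant $C$ in the statement of the corollary is the uniform-transience constant from Lemma~\ref{Sobolev_embedding_unif}, not the optimal Sobolev constant, and that the inequality $S_{\infty,0,o}(Q^{(D)})\leq C$ only makes the bound weaker, so replacing $S_{\infty,0,o}(Q^{(D)})$ by $C$ in the denominator is legitimate. One should also remark, as the paragraph preceding the corollary already does, that discreteness of the spectrum follows equally well by combining Lemma~\ref{Sobolev_embedding_unif} with Proposition~\ref{spectrum_discrete}, so the discreteness assertion is not new; the content of the corollary is the explicit linear lower bound with the constant expressed purely in terms of the uniform-transience constant and the total mass. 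Finally, since the constant in Lemma~\ref{Sobolev_embedding_unif} is independent of $m$, the corollary gives a genuinely measure-uniform family of eigenvalue bounds as $m$ ranges over finite measures of full support on the fixed uniformly transient graph.
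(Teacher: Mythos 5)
Your proposal is correct and follows exactly the route the paper intends: Lemma~\ref{Sobolev_embedding_unif} gives the $\ell^\infty_{\alpha,o}$-Sobolev inequality with $\alpha=0$, and Theorem~\ref{spectral_bound_lin} then yields discreteness and the bound with the $-\alpha/m(o)$ term vanishing, with $S_{\infty,0,o}(Q^{(D)})\leq C$ only weakening the estimate. Nothing is missing.
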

Moreover, the following theorem, which is taken from \cite[Theorem~7.2, 7.3]{KLSW17}, is a direct consequence of Theorem~\ref{spectral_bound_dist_gen}.

\begin{theorem}[Theorem~7.2, 7.3 in \cite{KLSW17}]\label{thm:lower_bound_unif_trans}  Let $(b,c)$ be a uniformly transient graph over $ (X,m) $ with finite  measure $ m(X)<\infty $. 
Then the spectrum of Dirichlet Laplacian $L^{(D)}$ of the graph is discrete.  Moreover, if $\{x_n\}$ is an enumeration of $X,$ then for any $k\geq 1,$ 
\begin{equation*}
\label{eq:keller_original}\lambda_{k}\geq \frac{1}{S_\infty(Q)m(X\setminus\{x_1,\cdots,x_{k-1}\})},
\end{equation*}
where $\lambda_k$ denotes the $k$-th eigenvalue of $L^{(D)}$   counted with multiplicity.
\end{theorem}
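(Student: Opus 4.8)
The plan is to obtain this statement as a direct consequence of Theorem~\ref{spectral_bound_dist_gen}, feeding in the Sobolev inequality supplied by Lemma~\ref{Sobolev_embedding_unif}; the only point that needs a little attention is the dependence on the base point $o$.

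I would begin by invoking Lemma~\ref{Sobolev_embedding_unif}: uniform transience of $(b,c)$ gives a constant $C>0$, independent of the finite measure $m$, such that $\norm{f}_\infty^2\leq C\,Q^{(D)}(f)$ for all $f\in D(Q^{(D)})$. In the terminology of Definition~\ref{def:Sobolev} this is the $\ell^\infty_{\alpha,o}$-Sobolev inequality with $\alpha=0$, and in that case neither the inequality nor its optimal constant $S_\infty(Q^{(D)})$ depends on the chosen point $o$; thus $S_{\infty,0,o}(Q^{(D)})=S_\infty(Q^{(D)})$ for every $o\in X$. Theorem~\ref{spectral_bound_dist_gen}, applied to $L=L^{(D)}$ with $\alpha=0$, then immediately yields that the spectrum of $L^{(D)}$ is purely discrete. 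For the eigenvalue bound, let $\{x_n\}$ be an arbitrary enumeration of $X$ and set $o:=x_1$; since the $\ell^\infty_{0,o}$-Sobolev inequality holds with the same constant for this (indeed every) choice of $o$, Theorem~\ref{spectral_bound_dist_gen} applies to the enumeration $(x_n)_{n\in\mathbb{N}}$ and gives, for each $k\geq 1$,
\begin{align*}
\lambda_k\geq\frac{1}{S_{\infty,0,o}(Q^{(D)})\,m(X\setminus\{x_1,\dots,x_{k-1}\})}=\frac{1}{S_\infty(Q)\,m(X\setminus\{x_1,\dots,x_{k-1}\})},
\end{align*}
which is the claim.

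I do not expect a genuine obstacle here. The analytic substance is already contained in Theorem~\ref{spectral_bound_dist_gen} (the Courant--Fischer--Weyl min-max principle together with the elementary estimate $\norm{f}_2^2\leq m(X\setminus\{x_1,\dots,x_k\})\norm{f}_\infty^2$ for functions $f$ vanishing at $x_1,\dots,x_k$) and in Lemma~\ref{Sobolev_embedding_unif}. The only thing to watch is that Theorem~\ref{spectral_bound_dist_gen} is formulated for enumerations with $x_1=o$, whereas here the enumeration is arbitrary; this is harmless precisely because in the uniformly transient case the Sobolev inequality is valid for all base points with one and the same constant, so one is free to take $o=x_1$.
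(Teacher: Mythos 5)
Your proposal is correct and follows exactly the route the paper intends: the theorem is obtained by combining Lemma~\ref{Sobolev_embedding_unif} (the $\ell^\infty$-Sobolev inequality with $\alpha=0$, valid with one constant for every base point) with Theorem~\ref{spectral_bound_dist_gen}, and your observation that the case $\alpha=0$ removes any dependence on the choice of $o$, so that one may take $o=x_1$ for an arbitrary enumeration, is precisely the point that makes the deduction immediate.
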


Using Theorem~\ref{thm:lower_bound_unif_trans}  one can show that  for a given graph $(b,0)$ and a given total mass of $m$, 
there is no upper bound on the growth of the eigenvalues of the Dirichlet Laplacian.
\begin{proposition}\label{no_upper_bound}
Let $(b,0)$ be a connected uniformly transient graph over $X$. For every sequence $(a_k)$ of positive numbers there exists a probability measure $m$ on $X$ such that the eigenvalues $\lambda_1\leq \lambda_2\leq \dots$ of $L^{(D)}$ satisfy
\begin{align*}
\frac{\lambda_k}{a_k}\to \infty.
\end{align*}
\end{proposition}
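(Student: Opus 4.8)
The plan is to exploit the enumeration-dependent lower bound from Theorem~\ref{thm:lower_bound_unif_trans} and the fact that we have complete freedom in choosing the probability measure $m$. The key observation is that \eqref{eq:keller_original} gives
\begin{align*}
\lambda_k\geq \frac{1}{S_\infty(Q)\,m(X\setminus\{x_1,\dots,x_{k-1}\})},
\end{align*}
so if we can arrange for the "tail mass" $m(X\setminus\{x_1,\dots,x_{k-1}\})$ to decay faster than any prescribed rate, the eigenvalues will grow faster than any prescribed sequence. Concretely, fix any enumeration $\{x_n\}_{n\geq 1}$ of $X$ and, given $(a_k)$, define a sequence of tail masses $t_k := m(X\setminus\{x_1,\dots,x_{k-1}\})$ decreasing to $0$; then set $m(x_k) = t_k - t_{k+1}\geq 0$. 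We are free to choose $t_1 = 1$ (making $m$ a probability measure) and to let $t_k\to 0$ as fast as we like.

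The main steps would be: First, recall from Theorem~\ref{thm:lower_bound_unif_trans} (via Lemma~\ref{Sobolev_embedding_unif}) that the constant $S_\infty(Q)=:S$ is independent of the measure $m$, so it is a fixed number once the graph is fixed; this is the crucial point that makes the construction work. Second, without loss of generality assume $(a_k)$ is increasing (replace $a_k$ by $\max_{j\leq k}a_j$ if necessary; this only makes the claimed divergence stronger). Third, choose the tail masses $t_k$ so that $t_k \leq \frac{1}{k\,S\,a_k}$ for all $k\geq 1$, while keeping $t_1=1$ and $t_k$ strictly decreasing with $t_k\to 0$ — this is possible since we may take, e.g., $t_k = \min\{1, (k\,S\,a_k)^{-1}, t_{k-1}/2\}$ for $k\geq 2$, which is manifestly positive and strictly decreasing. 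Fourth, define the probability measure $m$ on $X$ by $m(x_k) := t_k - t_{k+1} > 0$; then $\sum_k m(x_k) = t_1 = 1$ and the partial-tail identity $m(X\setminus\{x_1,\dots,x_{k-1}\}) = t_k$ holds by telescoping. Fifth, plug into \eqref{eq:keller_original} to get $\lambda_k \geq \frac{1}{S\,t_k} \geq k\,a_k$, hence $\lambda_k/a_k \geq k \to \infty$.

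I do not expect a genuine obstacle here; the argument is essentially a bookkeeping exercise once one notices the measure-independence of $S_\infty(Q)$. The only mild subtlety is ensuring the constructed $m$ is genuinely a measure of full support (all values strictly positive) and simultaneously a probability measure with the prescribed telescoping tails — this is handled by the explicit recursive choice of $t_k$ above, which builds in strict positivity of the increments and normalization $t_1 = 1$. One should also remark that uniform transience of $(b,0)$ is preserved under the change of measure precisely because Lemma~\ref{Sobolev_embedding_unif} gives an $\ell^\infty_{\alpha,o}$-Sobolev inequality with $m$-independent constant, so Theorem~\ref{thm:lower_bound_unif_trans} applies verbatim to the new measure space $(X,m)$; and one invokes Proposition~\ref{spectrum_discrete} (or the discreteness already asserted in Theorem~\ref{thm:lower_bound_unif_trans}) to guarantee the eigenvalues $\lambda_k$ are well-defined.
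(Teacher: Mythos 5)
Your proposal is correct and follows essentially the same route as the paper: both exploit the measure-independence of the Sobolev constant and construct a probability measure by prescribing rapidly decaying telescoping tail masses $t_k=m(X\setminus\{x_1,\dots,x_{k-1}\})$, then apply the enumeration-dependent bound of Theorem~\ref{thm:lower_bound_unif_trans}. The only difference is cosmetic: the paper takes the explicit choice $m(x_j)=a_1^2(a_j^{-2}-a_{j+1}^{-2})$ (after normalizing $a_k\geq k$ increasing), whereas you choose the tails recursively; both yield $\lambda_k/a_k\to\infty$.
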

\begin{proof}
We may assume without loss of generality that $(a_k)$ is increasing and $a_k\geq k$ for all $k\in\mathbb{N}$. Let $(x_j)$ be an enumeration of $X$ and let 
\begin{align*}
m(x_j)=a_1^2\left(\frac 1 {a_j^2}-\frac  1{a_{j+1}^2}\right).
\end{align*}
Then $m$ is a probability measure (telescope sum) and
\begin{align*}
m(X\setminus\{x_1,\dots,x_k\})=a_1^2\sum_{j=k+1}^\infty \left(\frac 1 {a_j^2}-\frac 1{a_{j+1}^2}\right)=\frac{a_1^2}{a_{k+1}^2}.
\end{align*}
It follows from Theorem~\ref{thm:lower_bound_unif_trans} that
\begin{align*}
\frac{\lambda_{k+1}}{a_{k+1}}\geq \frac {a_{k+1}^2}{C a_1^2 a_{k+1}}=\frac{a_{k+1}}{C a_1^2}\to \infty
\end{align*}
as $k\to\infty$.
\end{proof}
By the previous lemma, there is no general upper bound for the eigenvalue growth of Dirichlet forms that satisfy a $\ell^\infty$-Sobolev inequality. However, the next lemma  shows that 
the linear growth is, in some sense, the best lower bound one can achieve.
\begin{proposition}\label{prop:sharpexample}
Let $(b,0)$ be a uniformly transient graph such that the map $\operatorname{deg}: X\to [0,\infty)$, $\operatorname{deg}(x)=\sum_{y\in X} b(x,y)$ is bounded. Then, for every $\varepsilon>0$ there is a finite measure $m$ on $X$ such that the eigenvalues  $\lambda_1\leq\lambda_2\leq \ldots$ of $L^{(D)}$ satisfy 
\begin{align*}
\frac{\lambda_k}{k^{1+\varepsilon}}\to 0.
\end{align*} 
\end{proposition}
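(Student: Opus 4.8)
The goal is to exhibit, for a uniformly transient graph $(b,0)$ with bounded vertex degree and any $\varepsilon > 0$, a finite measure $m$ making the Dirichlet eigenvalues grow slower than $k^{1+\varepsilon}$. The natural idea is to go to the complementary bound, namely Theorem~\ref{spectral_bound_lin} (applied with $\alpha = 0$ via Lemma~\ref{Sobolev_embedding_unif}), which gives an \emph{upper} bound on eigenvalues only indirectly; so instead I would use the min-max principle directly with cleverly chosen test functions. The key observation is that with $\deg$ bounded, say $\deg \le D$, the energy form is comparable to $\sum_x \deg(x)\abs{f(x)}^2$-type quantities on finitely supported functions: more precisely, for $f = 1_x$ we have $\tilde Q(1_x) = \deg(x) \le D$. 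So the Rayleigh quotient $\tilde Q(1_x)/\norm{1_x}_2^2 = \deg(x)/m(x)$ can be made small by choosing $m(x)$ large relative to $\deg(x)$.

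\textbf{First step: build a trial space.} Fix an enumeration $(x_j)_{j\ge 1}$ of $X$. For a chosen sequence of weights $m(x_j) > 0$ with $\sum_j m(x_j) < \infty$, consider the $(k+1)$-dimensional space $V_k = \operatorname{span}\{1_{x_1},\dots,1_{x_{k+1}}\}$. These are orthogonal in $\ell^2(X,m)$. For $f = \sum_{j=1}^{k+1}c_j 1_{x_j}$ we estimate the energy: expanding $\tilde Q(f) = \frac12\sum_{x,y}b(x,y)\abs{f(x)-f(y)}^2 \le \sum_{x,y}b(x,y)(\abs{f(x)}^2 + \abs{f(y)}^2) = 2\sum_x \deg(x)\abs{f(x)}^2 \le 2D\sum_{j=1}^{k+1}\abs{c_j}^2$, while $\norm{f}_2^2 = \sum_{j=1}^{k+1}m(x_j)\abs{c_j}^2 \ge m(x_{k+1})\sum_j\abs{c_j}^2$ if the $m(x_j)$ are chosen decreasing. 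Hence by the min-max principle $\lambda_{k+1} \le \sup_{f\in V_k\setminus\{0\}} \tilde Q(f)/\norm{f}_2^2 \le 2D/m(x_{k+1})$.

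\textbf{Second step: choose the measure.} Given $\varepsilon > 0$, set $m(x_j) = j^{-1-\varepsilon/2}$ (or any summable decreasing sequence decaying slower than $j^{-1-\varepsilon}$); then $m(X) = \sum_j j^{-1-\varepsilon/2} < \infty$, so $m$ is a finite measure, which can be normalized to a probability measure if desired. The estimate from the first step then reads $\lambda_{k+1} \le 2D(k+1)^{1+\varepsilon/2}$, so $\lambda_k/k^{1+\varepsilon} \le 2D\, k^{1+\varepsilon/2}/k^{1+\varepsilon} = 2D\, k^{-\varepsilon/2} \to 0$ as $k\to\infty$, which is exactly the claim. One should double-check that $1_{x_j} \in D(Q^{(D)})$: since $1_{x_j} \in C_c(X)$ and $Q^{(D)}$ is the closure of $\tilde Q$ restricted to $C_c(X)$, this is immediate, and likewise discreteness of the spectrum is guaranteed by Lemma~\ref{Sobolev_embedding_unif} together with Proposition~\ref{spectrum_discrete}.

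\textbf{Main obstacle.} The substantive point is ensuring the energy bound $\tilde Q(f) \le 2D\sum\abs{c_j}^2$ is uniform over the trial space; this relies crucially on the bounded-degree hypothesis, and the crude bound $\abs{f(x)-f(y)}^2 \le 2\abs{f(x)}^2 + 2\abs{f(y)}^2$ suffices because we do not need a sharp constant—only polynomial control in $k$ versus the measure. A secondary subtlety is that uniform transience per se is not used in the construction beyond guaranteeing discreteness of the spectrum (so that the eigenvalues $\lambda_k$ are well-defined and the min-max principle applies in the form we want); the real work is entirely in the elementary measure-tuning, exactly in the spirit of Proposition~\ref{no_upper_bound} but pushing the measure decay in the opposite (slow) direction.
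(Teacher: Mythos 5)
Your proof is correct and follows essentially the same route as the paper: the same measure $m(x_n)=n^{-(1+\varepsilon/2)}$, the same bounded-degree energy bound $\tilde Q(f)\le 2D\sum_x\abs{f(x)}^2$, and the same min-max comparison. The paper phrases the last step as domination of $Q^{(D)}$ by the form of the multiplication operator $2D/m$ (whose eigenfunctions are exactly your indicator trial functions), so your explicit trial-space computation is just an unwound version of the paper's argument.
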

  \begin{proof}
  Let $D>0$ be an upper bound on $\operatorname{deg}$. Let $x_1,x_2,\ldots$ be an enumeration of $X$ and let $\varepsilon>0$ be arbitrary.
  We define $ m $ to be the finite measure given via $m(x_n)=n^{-(1+\frac\varepsilon2)}$.  An easy calculation shows
$$Q^{(D)}(f)\leq 2\langle \frac{1}{m}\mathrm{deg} \,f,f\rangle\leq \langle\frac{2D}{m} f,f\rangle$$
for every $f\in D(Q^{(D)})$. 
By the min-max principle we infer $ \lambda_{k}\leq \mu_{k} $, where $ \mu_{k} $ denote the $ k $-th eigenvalues of the multiplication operator by the function $ 2D/m $ on $ \ell^{2}(X,m) $. Moreover, by the definition of the measure, we infer  $$ \mu_{k}=\frac{2D}{m(x_k)}={2D}{k^{1+\frac\varepsilon2}} $$ which concludes the proof.
  \end{proof}
\begin{example}
Uniformly transient graphs $(b,0)$ with bounded $\operatorname{deg}$ are for example the regular binary tree with standard weights, c.f. \cite[Example~2.5.]{KLSW17}, and $\mathbb{Z}^d$, $d\geq 3$ with $b(x,y)=1$ if $x$ and $y$ have Euclidean distance one, and $b(x,y)=0$ otherwise, c.f. \cite[Section 6]{KLSW17}. Moreover, there it is shown that $\mathbb{Z}^d$, $d\geq 3$, is a canonically compactifiable graph, (see the next section for the definition).  
\end{example}

\subsection{Canonically Compactifiable Graphs}\label{section:canon}

An infinite connected graph $(b,c)$ over $(X,m)$ is called \emph{canonically compactifiable} if
$$ D(Q^{(N)})\subseteq \ell^\infty(X) .$$
Canonically compactifiable graphs were first defined in \cite{GHKLW15} under the slightly stronger hypothesis $ \widetilde{D}\subseteq \ell^{\infty}(X) $, which does not include the measure. There it was shown that that Laplacians on these graphs share indeed many properties with Laplacians on pre-compact manifolds with nice boundary.  In   \cite{KS17}, non-linear equations were studied on these graphs using the condition including the measure as we have introduced it above.

For Dirichlet forms $Q_1,Q_2$ we write $Q_1\geq Q_2$ if $D(Q_1)\subseteq D(Q_2)$ and $Q_1(u)\geq Q_2(u)$ for every $u\in D(Q_1)$. 
From the definitions it is clear that $Q^{(D)}\geq Q^{(N)}$.
In this section we want to study the growth of eigenvalues of the operators associated with  Dirichlet forms $Q$ satisfying  $Q^{(D)}\geq Q\geq Q^{(N)}$.

Note, that in \cite{KLSW17} it was shown that canonically compactifiable graphs are uniformly transient. Hence, Corollary~\ref{Dirichlet_lower_bound_lin} and Theorem~\ref{thm:lower_bound_unif_trans} can be applied to $Q^{(D)}$. Therefore, we immediately have lower bounds on the growth of the eigenvalues of the Dirichlet Laplacian. However, on canonically compactifiable graphs every Dirichlet form between 
$Q^{(D)}$ and $Q^{(N)}$ satisfies a $\ell^\infty_{\alpha,o}$-Sobolev inequality. This is the content of the next lemma.
\begin{lemma}\label{Canon_Sobolev_embedding}
Let $(b,c)$ be a canonically compactifiable graph over $ (X,m) $ with finite  measure $ m(X)<\infty $ and let $Q$ be a Dirichlet form such that $Q^{(D)}\geq Q\geq Q^{(N)}$.  Then, for every $o\in X$ there exists a constant $C>0$ such that $Q$ satisfies the $\ell^\infty_{\alpha,o}$-Sobolev inequality
\begin{align*}
\norm{f}_\infty^2\leq C (Q(f)+|f(o)|^2)
\end{align*}
for all $f\in  D(Q)$.
\end{lemma}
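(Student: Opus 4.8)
The plan is to reduce the claim to the definition of canonical compactifiability, namely $D(Q^{(N)})\subseteq\ell^\infty(X)$, via a closed graph argument. Since $Q^{(D)}\geq Q\geq Q^{(N)}$ means in particular $D(Q)\subseteq D(Q^{(N)})$ with $Q(f)\geq Q^{(N)}(f)$, it suffices to produce the $\ell^\infty_{\alpha,o}$-Sobolev inequality for $Q^{(N)}$ itself: if $\|f\|_\infty^2\leq C(Q^{(N)}(f)+|f(o)|^2)$ for all $f\in D(Q^{(N)})$, then the same inequality with the same $C$ holds for every $f\in D(Q)$ because $Q(f)+|f(o)|^2\geq Q^{(N)}(f)+|f(o)|^2$.

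First I would fix $o\in X$ and consider the Hilbert space $(D(Q^{(N)}),\norm{\cdot}_o)$, where $\norm{f}_o=(Q^{(N)}(f)+|f(o)|^2)^{1/2}$; this is a norm (as observed in the text for $\tilde Q$, the restriction to $D(Q^{(N)})$ retains this property since $Q^{(N)}$ is a closed form and adding $|f(o)|^2$ separates points, using that the graph is connected) and it is complete because $Q^{(N)}$ is closed and the point evaluation at $o$ is continuous with respect to the form norm on $\ell^2(X,m)$ (as $m(o)>0$). Next I would consider the identity map $\iota\colon (D(Q^{(N)}),\norm{\cdot}_o)\to\ell^\infty(X)$, which is well-defined precisely by the hypothesis $D(Q^{(N)})\subseteq\ell^\infty(X)$. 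The key step is to check that $\iota$ is closed: if $f_n\to f$ in $\norm{\cdot}_o$ and $f_n\to g$ in $\ell^\infty(X)$, then convergence in $\norm{\cdot}_o$ forces $f_n(o)\to f(o)$ and, together with $Q^{(N)}$-convergence and lower semicontinuity, one extracts pointwise convergence $f_n(x)\to f(x)$ for every $x$ (convergence in form norm implies $\ell^2$-convergence hence a pointwise-convergent subsequence, and the limit is $f$); but $\ell^\infty$-convergence also gives $f_n(x)\to g(x)$ pointwise, so $f=g$. By the closed graph theorem $\iota$ is bounded, i.e. there is $C=C(o)>0$ with $\|f\|_\infty\leq C^{1/2}\norm{f}_o$, which is exactly $\|f\|_\infty^2\leq C(Q^{(N)}(f)+|f(o)|^2)$ on $D(Q^{(N)})$. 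Restricting to $D(Q)$ and using $Q\geq Q^{(N)}$ yields the claimed inequality for $Q$.

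The main obstacle I anticipate is the verification that $(D(Q^{(N)}),\norm{\cdot}_o)$ is genuinely a Banach space and that the evaluation functional $f\mapsto f(o)$ is continuous on it; both hinge on combining closedness of $Q^{(N)}$ in $\ell^2(X,m)$ with the fact that $\ell^2$-convergence at the single point $o$ follows from $\ell^2$-convergence (since $m(o)>0$). Once that functional-analytic setup is in place, the closed graph argument is routine and mirrors the proof of Proposition~\ref{spectrum_discrete}. A minor point to address is that the constant $C$ depends on $o$ (hence implicitly on which Sobolev inequality one wants), but since $o$ is fixed in the statement this causes no difficulty; connectedness guarantees the norms $\norm{\cdot}_o$ for different base points are equivalent, so the statement is in any case uniform in the choice of $o$ up to constants.
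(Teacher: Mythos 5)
Your reduction to $Q^{(N)}$ via $Q\geq Q^{(N)}$ is exactly what the paper does; the paper then simply cites \cite[Lemma~4.2]{GHKLW15} for the inequality for $Q^{(N)}$, whereas you attempt to prove it by a closed graph argument (which is indeed how that cited lemma is proved). The strategy is right, but your execution has a gap at the crux: the completeness of $(D(Q^{(N)}),\norm{\cdot}_o)$. You justify it by saying that $Q^{(N)}$ is closed and that evaluation at $o$ is continuous for the form norm. Closedness of $Q^{(N)}$ gives completeness of $D(Q^{(N)})$ in the form norm $(Q^{(N)}(f)+\norm{f}_2^2)^{1/2}$, and continuity of $f\mapsto f(o)$ only shows $\norm{\cdot}_o\leq C\norm{\cdot}_{Q^{(N)}}$, i.e.\ that $\norm{\cdot}_o$ is the \emph{weaker} norm. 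A space complete in a stronger norm need not be complete in a weaker one: a $\norm{\cdot}_o$-Cauchy sequence need not be $\ell^2$-Cauchy, and its pointwise/energy limit lies in $\tilde D$ but need not a priori lie in $\ell^2(X,m)$; showing that it does is essentially equivalent to the inequality you are proving, so the argument is circular at this point. The same conflation appears in your closedness check, where you invoke ``convergence in form norm implies $\ell^2$-convergence'' although you only have $\norm{\cdot}_o$-convergence. (That step is repairable without any $\ell^2$ input: on a connected graph $b(x,y)\abs{g(x)-g(y)}^2\leq 2\tilde Q(g)$, so $\norm{\cdot}_o$-convergence forces pointwise convergence along paths from $o$.)

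The standard fix, and the content of the cited lemma, is to run the closed graph argument on $(\tilde D,\norm{\cdot}_o)$ rather than on $D(Q^{(N)})$: completeness of $(\tilde D,\norm{\cdot}_o)$ holds for any connected graph, independently of the measure, by the pointwise estimate above together with lower semicontinuity of $\tilde Q$ under pointwise convergence, and the inclusion $\tilde D\subseteq\ell^\infty(X)$ is the measure-free definition of canonical compactifiability used in \cite{GHKLW15}. One then restricts the resulting inequality to $D(Q^{(N)})\subseteq\tilde D$ and finally to $D(Q)$. Under the weaker, measure-dependent definition $D(Q^{(N)})\subseteq\ell^\infty(X)$ adopted in this paper, your argument does not go through as written, and an additional step is needed to obtain completeness of the domain to which you apply the closed graph theorem.
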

\begin{proof}
Since we have $Q(f)\geq Q^{(N)}(f)$ for every $f\in D(Q)$, we only have to prove the inequality for $Q^{(N)}$. This, however, is shown in
{\cite[Lemma~4.2]{GHKLW15}}.
\end{proof}
The previous lemma combined with Proposition~\ref{spectrum_discrete} immediately shows that on a uniformly transient graph the form $Q^{(D)}$ has compact resolvent. Moreover, we can apply
Theorem~\ref{spectral_bound_lin} and Theorem~\ref{spectral_bound_dist_gen}, which immediately give the following corollaries.
\begin{corollary}
Let $(b,c)$ be a canonically compactifiable graph over $ (X,m) $ with finite  measure $ m(X)<\infty $. Let $L$ be the generator of a Dirichlet form $Q$ with $Q\geq Q^{(N)}$. Then for every $o\in X$ there is a $C>0$ such that 
for all $k\in\mathbb{N}$
\begin{align*}
\lambda_k\geq \frac{k}{C m(X)}-\frac{1}{m(o)}
\end{align*}
where $\lambda_k$ denotes the $k$-th eigenvalue of $L$   counted with multiplicity. 
\end{corollary}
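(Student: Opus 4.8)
The plan is to reduce the corollary to Lemma~\ref{Canon_Sobolev_embedding} and Theorem~\ref{spectral_bound_lin}, which together do essentially all the work. First I would invoke Lemma~\ref{Canon_Sobolev_embedding}: since $(b,c)$ is canonically compactifiable with finite measure and $Q$ is a Dirichlet form with $Q\geq Q^{(N)}$ (equivalently $Q^{(D)}\geq Q\geq Q^{(N)}$, as $Q^{(D)}$ is the maximal regular Dirichlet form dominating $Q^{(N)}$), the lemma gives, for the chosen $o\in X$, a constant $C>0$ with $\norm{f}_\infty^2\leq C(Q(f)+\abs{f(o)}^2)$ for all $f\in D(Q)$. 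In the language of Definition~\ref{def:Sobolev}, this says precisely that $Q$ satisfies an $\ell^\infty_{\alpha,o}$-Sobolev inequality with $\alpha=1$ and Sobolev constant $S_{\infty,1,o}(Q)\leq C$.

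Having established the $\ell^\infty_{1,o}$-Sobolev inequality, I would apply Theorem~\ref{spectral_bound_lin} directly with $\alpha=1$. Since the measure is finite, the theorem yields that $L$ has purely discrete spectrum and that for all $k\geq 1$,
\begin{align*}
\lambda_k\geq \frac{k}{S_{\infty,1,o}(Q)\,m(X)}-\frac{1}{m(o)}\geq \frac{k}{C m(X)}-\frac{1}{m(o)},
\end{align*}
where the second inequality uses $S_{\infty,1,o}(Q)\leq C$ together with monotonicity of $t\mapsto k/(t\,m(X))$. (One should note $C$ depends on $o$, which matches the statement "for every $o\in X$ there is a $C>0$".) This is exactly the claimed bound.

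The one point requiring a sentence of care is the passage from the hypothesis "$Q\geq Q^{(N)}$" to the two-sided bound "$Q^{(D)}\geq Q\geq Q^{(N)}$" needed to quote Lemma~\ref{Canon_Sobolev_embedding} verbatim; but in fact the proof of that lemma only uses the lower bound $Q\geq Q^{(N)}$, so this is not a real obstacle — I would simply remark that the monotonicity of Sobolev constants under domination of forms means only $Q\geq Q^{(N)}$ is used. I do not expect any genuine difficulty here: the corollary is a pure corollary, and essentially the entire content is the observation that canonical compactifiability upgrades to an $\ell^\infty_{1,o}$-Sobolev inequality (Lemma~\ref{Canon_Sobolev_embedding}) to which the abstract eigenvalue estimate (Theorem~\ref{spectral_bound_lin}) applies. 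The main "obstacle", such as it is, is just bookkeeping the constant $\alpha=1$ and its dependence on $o$ correctly so that the $-1/m(o)$ term in the conclusion comes out as stated.
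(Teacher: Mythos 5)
Your proposal is correct and follows exactly the route the paper intends: Lemma~\ref{Canon_Sobolev_embedding} supplies the $\ell^\infty_{1,o}$-Sobolev inequality with constant $S_{\infty,1,o}(Q)\leq C$, and Theorem~\ref{spectral_bound_lin} with $\alpha=1$ then gives the stated bound. Your side remark that only the lower bound $Q\geq Q^{(N)}$ is actually used in the lemma's proof is also accurate and resolves the only mismatch between the lemma's hypotheses and the corollary's.
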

\begin{corollary}Let $(b,c)$ be a canonically compactifiable graph over $ (X,m) $ with finite  measure $ m(X)<\infty $. Let $L$ be the generator of a Dirichlet form $Q$ with $Q\geq Q^{(N)}$ and let $(x_k)_{k\in\mathbb{N}}$ be an enumeration of $X$. Then, there is $C>0$ such that  
 for all $k\in\mathbb{N}$
\begin{align*}
\lambda_{k+1}\geq \frac 1{C m(X\setminus\{x_1,\dots,x_k\})}
\end{align*}
where $\lambda_k$ denotes the $k$-th eigenvalue of $L$   counted with multiplicity.
\end{corollary}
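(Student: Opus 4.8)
The plan is to derive this corollary directly from Theorem~\ref{spectral_bound_dist_gen}, exactly as the analogous Corollary was derived from Theorem~\ref{spectral_bound_lin}. First I would invoke Lemma~\ref{Canon_Sobolev_embedding}: since $(b,c)$ is canonically compactifiable and $Q\geq Q^{(N)}$, for any fixed choice of base point $o\in X$ — say $o=x_1$, the first element of the given enumeration — there is a constant $C>0$ such that $Q$ satisfies the $\ell^\infty_{1,o}$-Sobolev inequality $\norm{f}_\infty^2\leq C(Q(f)+\abs{f(o)}^2)$ for all $f\in D(Q)$. In other words, $Q$ satisfies an $\ell^\infty_{\alpha,o}$-Sobolev inequality with $\alpha=1$ and $S_{\infty,\alpha,o}(Q)\leq C$.

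Next I would apply Theorem~\ref{spectral_bound_dist_gen} with the enumeration $(x_k)_{k\in\mathbb{N}}$, which the theorem allows provided we take $x_1=o$; this is why I chose $o=x_1$ above. Since $(X,m)$ has finite measure by hypothesis, the theorem yields that the spectrum of $L$ is purely discrete and, for every $k\geq 1$,
\begin{equation*}
\lambda_{k}\geq \frac{1}{S_{\infty,1,o}(Q)\, m(X\setminus\{x_1,\dots,x_{k-1}\})}\geq \frac{1}{C\, m(X\setminus\{x_1,\dots,x_{k-1}\})}.
\end{equation*}
Re-indexing $k\mapsto k+1$ gives precisely the asserted inequality $\lambda_{k+1}\geq 1/(C\, m(X\setminus\{x_1,\dots,x_k\}))$ for all $k\in\mathbb{N}$, which completes the proof.

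There is essentially no obstacle here: all the work has been done in Lemma~\ref{Canon_Sobolev_embedding} and Theorem~\ref{spectral_bound_dist_gen}, and the corollary is a direct combination of the two, the only minor point being to align the base point $o$ of the Sobolev inequality with the first vertex $x_1$ of the prescribed enumeration so that the hypothesis $x_1=o$ of Theorem~\ref{spectral_bound_dist_gen} is met. If one prefers a statement with an arbitrary base point, one can alternatively note that on a connected graph the norms $\norm{\cdot}_o$ are all equivalent, so the constant $C$ can be chosen uniformly, at the cost of a possibly larger constant; but fixing $o=x_1$ is the cleanest route and needs no such remark.
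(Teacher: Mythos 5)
Your proof is correct and is exactly the paper's argument: the paper's own proof is the one-line observation that the claim follows from Theorem~\ref{spectral_bound_dist_gen} together with the $\ell^\infty_{1,x_1}$-Sobolev inequality supplied by Lemma~\ref{Canon_Sobolev_embedding}. Your choice $o=x_1$, the bound $S_{\infty,1,o}(Q)\leq C$, and the reindexing $k\mapsto k+1$ just spell out the details the paper leaves implicit.
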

\begin{proof}
This follows by Theorem~\ref{spectral_bound_dist_gen} and since $Q$ satisfies a $\ell_{1,x_1}^\infty$-Sobolev inequality by Lemma~\ref{Canon_Sobolev_embedding}.
\end{proof}

\bibliography{mybib}{}
\bibliographystyle{alpha}

\end{document}